\documentclass[11pt]{amsart}
\pagestyle{myheadings}
\usepackage{amsmath}
\usepackage{amssymb}
\usepackage{amsthm}
\usepackage{amscd}
\usepackage{enumerate}
\usepackage{verbatim}
\usepackage[all]{xy}
\usepackage{mathrsfs}

\theoremstyle{plain}
\newtheorem{thm}{Theorem}[section]
\newtheorem{prop}[thm]{Proposition}
\newtheorem{lem}[thm]{Lemma}
\newtheorem{cor}[thm]{Corollary}

\theoremstyle{definition}

\newtheorem{rmk}[thm]{Remark}

\newcommand{\Hom}{\mathrm{Hom}}

\newcommand{\Lie}{\mathrm{Lie}}

\newcommand{\Ker}{\mathrm{Ker}}

\newcommand{\Img}{\mathrm{Im}}
\newcommand{\prjt}{\mathrm{pr}}
\newcommand{\Fil}{\mathrm{Fil}}
\newcommand{\Spf}{\mathrm{Spf}}

\newcommand{\Spec}{\mathrm{Spec}}

\newcommand{\Span}{\mathrm{Span}}

\newcommand{\rig}{\mathrm{rig}}
\newcommand{\HT}{\mathrm{HT}}
\newcommand{\Hdg}{\mathrm{Hdg}}

\newcommand{\et}{\mathrm{et}}
\newcommand{\ord}{\mathrm{ord}}

\newcommand{\sExt}{\mathscr{E}\!{\it xt}}

\newcommand{\Kbar}{\bar{K}}

\newcommand{\okbar}{\mathcal{O}_{\bar{K}}}

\newcommand{\Acrys}{A_{\mathrm{crys}}}

\newcommand{\okey}{\mathcal{O}_K}
\newcommand{\oc}{\mathcal{O}_{\mathbb{C}}}
\newcommand{\oel}{\mathcal{O}_L}

\newcommand{\cC}{\mathcal{C}}
\newcommand{\cD}{\mathcal{D}}
\newcommand{\cE}{\mathcal{E}}
\newcommand{\cF}{\mathcal{F}}
\newcommand{\cG}{\mathcal{G}}
\newcommand{\cH}{\mathcal{H}}

\newcommand{\cJ}{\mathcal{J}}

\newcommand{\cM}{\mathcal{M}}

\newcommand{\cO}{\mathcal{O}}

\newcommand{\cT}{\mathcal{T}}

\newcommand{\Tcrys}{T_{\mathrm{crys}}^{*}}

\newcommand{\TSG}{T^{*}_{\SG}}

\newcommand{\tokbar}{\tilde{\mathcal{O}}_{\Kbar}}
\newcommand{\tokey}{\tilde{\mathcal{O}}_{K}}

\newcommand{\okbari}{\mathcal{O}_{\Kbar,i}}

\newcommand{\CRYS}{\mathrm{CRYS}}

\newcommand{\Mod}{\mathrm{Mod}}

\newcommand{\SG}{\mathfrak{S}}

\newcommand{\SGf}{\mathfrak{F}}
\newcommand{\SGm}{\mathfrak{M}}
\newcommand{\SGn}{\mathfrak{N}}
\newcommand{\SGl}{\mathfrak{L}}

\newcommand{\ModSGf}{\mathrm{Mod}_{/\mathfrak{S}_1}^{1,\varphi}}
\newcommand{\ModSGfinf}{\mathrm{Mod}_{/\mathfrak{S}_{\infty}}^{1,\varphi}}
\newcommand{\ModSGffr}{\mathrm{Mod}_{/\mathfrak{S}}^{1,\varphi}}

\newcommand{\ModSGfV}{\mathrm{Mod}_{/\mathfrak{S}_1}^{1,\varphi,V}}
\newcommand{\ModSGfinfV}{\mathrm{Mod}_{/\mathfrak{S}_{\infty}}^{1,\varphi,V}}
\newcommand{\ModSGffrV}{\mathrm{Mod}_{/\mathfrak{S}}^{1,\varphi,V}}

\newcommand{\ModSf}{\mathrm{Mod}_{/S_1}^{1,\varphi}}
\newcommand{\ModSfinf}{\mathrm{Mod}_{/S_\infty}^{1,\varphi}}
\newcommand{\ModSffr}{\mathrm{Mod}_{/S}^{1,\varphi}}
\newcommand{\ModSffrV}{\mathrm{Mod}_{/S}^{1,\varphi,V}}

\newcommand{\upi}{\underline{\pi}}

\newcommand{\bC}{\mathbb{C}}
\newcommand{\bD}{\mathbb{D}}

\newcommand{\bZ}{\mathbb{Z}}
\newcommand{\bQ}{\mathbb{Q}}
\newcommand{\bF}{\mathbb{F}}

\newcommand{\sS}{\mathscr{S}}

\newcommand{\frA}{\mathfrak{A}}
\newcommand{\frG}{\mathfrak{G}}

\newcommand{\frX}{\mathfrak{X}}

\makeatletter
\renewcommand{\p@enumii}{}
\makeatother

\begin{document}

\title[Canonical subgroups for $p=2$]{Canonical subgroups via Breuil-Kisin modules for $p=2$}
\author{Shin Hattori}
\date{\today}
\email{shin-h@math.kyushu-u.ac.jp}
\address{Faculty of Mathematics, Kyushu University}
\thanks{Supported by Grant-in-Aid for Young Scientists B-23740025.}

\begin{abstract}
Let $p$ be a rational prime and $K/\bQ_p$ be an extension of complete discrete valuation fields. Let $\cG$ be a truncated Barsotti-Tate group of level $n$, height $h$ and dimension $d$ over $\okey$ with $0<d<h$. In this paper, we prove the existence of higher canonical subgroups for $\cG$ with standard properties if the Hodge height of $\cG$ is less than $1/(p^{n-2}(p+1))$, including the case of $p=2$.
\end{abstract}

\maketitle

\section{Introduction}\label{intro}

Let $p$ be a rational prime and $K/\bQ_p$ be an extension of complete discrete valuation fields. Let $k$ be its residue field, $\pi$ be its uniformizer, $e$ be its absolute ramification index, $\Kbar$ be its algebraic closure and $v_p$ be its valuation extended to $\Kbar$ and normalized as $v_p(p)=1$. We let $\bC$ denote the completion of $\Kbar$. For any valuation field $F$ (of height one) with valuation $v_F$ and valuation ring $\cO_F$, put
$m_F^{\geq i}=\{x \in F\mid v_F(x)\geq i\}$
and $\cO_{F,i}=\cO_F/m_F^{\geq i}$ for any positive real number $i$. We also put $\tokey=\mathcal{O}_{K,1}$, $\tokbar=\mathcal{O}_{\Kbar,1}$ and $\sS_i=\Spec(\cO_{K,i})$. 

One of the key ingredients of the theory of $p$-adic Siegel modular forms is the existence theorem of canonical subgroups. Let $\frX$ be the formal completion of the Siegel modular variety of genus $g$ and level prime to $p$ over the Witt ring $W(k)$ along the special fiber, $X$ be its Raynaud generic fiber, $X^\ord$ be its ordinary locus considered as an admissible open subset of $X$ and $\frA$ be the universal abelian scheme over $\frX$. Consider the unit component $\frA[p^n]^0$ of the $p^n$-torsion of $\frA$ and its Raynaud generic fiber $(\frA[p^n]^0)^\rig$. The restriction $(\frA[p^n]^0)^\rig|_{X^\ord}$ is etale locally isomorphic to the constant group $(\bZ/p^n\bZ)^g$ and it is a lift of the kernel of the $n$-th iterated Frobenius of the special fiber of $\frA$. Then the theorem asserts that this subgroup can be extended to a subgroup $C_n$ with the same properties over a larger admissible open subset of $X$ containing $X^\ord$. In \cite{Ha_cansub}, the author proved the existence of such a subgroup over the locus of the Hodge height less than $1/(2p^{n-1})$ for $p\geq 3$. The aim of this paper is to generalize the result to the case of $p=2$.

To state the main theorem, we introduce some notation. For any finite flat (commutative) group scheme $\cG$ ({\it resp.} Barsotti-Tate group $\Gamma$) over $\okey$, we let $\omega_\cG$ ({\it resp.} $\omega_\Gamma$) denote its module of invariant differentials. Put $\deg(\cG)=\sum_i v_p(a_i)$ by writing $\omega_\cG\simeq \oplus_i\okey/(a_i)$. For any positive rational number $i$, the Hodge-Tate map for a finite flat group scheme $\cG$ over $\okey$ killed by $p^n$ is defined to be the natural homomorphism
\[
\HT_i:\cG(\okbar)\simeq\Hom(\cG^\vee\times\Spec(\okbar),\mu_{p^n}\times\Spec(\okbar)) \to \omega_{\cG^\vee}\otimes \okbari
\]
defined by $g\mapsto g^*(dT/T)$, where $\vee$ means the Cartier dual and $\mu_{p^n}=\Spec(\okey[T]/(T^{p^n}-1))$ is the group scheme of $p^n$-th roots of unity. We normalize the upper and the lower ramification subgroups of $\cG$ to be adapted to the valuation $v_p$. Namely, writing the affine algebra of $\cG$ as 
\[
\okey[T_1,\ldots,T_r]/(f_1,\ldots,f_s)
\]
and an $r$-tuple $(x_1,\ldots,x_r)\in \okbar^r$ as $\underline{x}$, we put
\begin{align*}
\cG^j(\okbar)&=\cG(\okbar)\cap \{\underline{x}\in \okbar^r\mid v_p(f_l(\underline{x}))\geq j\text{ for any $l$}\}^0,\\
\cG_i(\okbar)&=\Ker(\cG(\okbar)\to \cG(\okbari)),
\end{align*}
where $(-)^0$ in the first equality means the geometric connected component as an affinoid variety over $K$ containing the zero section (see \cite[Section 2]{AM}). We also put $\cG^{j+}(\okbar)=\cup_{j'>j}\cG^{j'}(\okbar)$ for any non-negative rational number $j$. The scheme-theoretic closure of $\cG^{j}(\okbar)$ in $\cG$ is denoted by $\cG^j$ and define $\cG^{j+}$ and $\cG_i$ similarly. Finally, for any truncated Barsotti-Tate group $\cG$ (\cite{Il}) of level $n$, height $h$ and dimension $d$ over $\okey$ with $d<h$, we define the Hodge height $\Hdg(\cG)$ to be the truncated valuation $v_p(\det(V))\in [0,1]$ of the determinant of the natural action of the Verschiebung $V$ of the group scheme $\cG[p]^\vee\times\Spec(\tokey)$ on the free $\tokey$-module of finite rank $\Lie(\cG[p]^\vee\times\Spec(\tokey))$. Then our main theorem is the following, which is proved in \cite{Ha_cansub} except the case of $p=2$.

\begin{thm}\label{mainZ}
Let $p$ be a rational prime and $K/\bQ_p$ be an extension of complete discrete valuation fields. Let $\cG$ be a truncated Barsotti-Tate group of level $n$, height $h$ and dimension $d$ over $\okey$ with $0<d<h$ and Hodge height $w=\Hdg(\cG)$. 
\begin{enumerate}
\item If $w<1/(p^{n-2}(p+1))$, then there exists a finite flat closed subgroup scheme $\cC_n$ of $\cG$ of order $p^{nd}$ over $\okey$, which we call  the level $n$ canonical subgroup of $\cG$, such that $\cC_n\times \sS_{1-p^{n-1}w}$ coincides with the kernel of the $n$-th iterated Frobenius homomorphism $F^n$ of $\cG\times \sS_{1-p^{n-1}w}$. Moreover, the group scheme $\cC_n$ has the following properties:
\begin{enumerate}[(a)]
\item\label{mainZ-deg} $\deg(\cG/\cC_n)=w(p^n-1)/(p-1)$.
\item\label{mainZ-iso} Put $\cC_n'$ to be the level $n$ canonical subgroup of $\cG^\vee$. Then we have the equality of subgroup schemes $\cC_n'=(\cG/\cC_n)^\vee$, or equivalently $\cC_n(\okbar)=\cC_n'(\okbar)^\bot$, where $\bot$ means the orthogonal subgroup with respect to the Cartier pairing.
\item\label{mainZ-ram1} If $n=1$, then $\cC_1=\cG_{(1-w)/(p-1)}=\cG^{pw/(p-1)+}$.
\end{enumerate}

\item If $w<(p-1)/(p^n-1)$, then the subgroup scheme $\cC_n$ also satisfies the following: 
\begin{enumerate}[(a)]
\setcounter{enumii}{3}
\item\label{mainZ-free}   the group $\cC_n(\okbar)$ is isomorphic to $(\bZ/p^n\bZ)^d$.
\item\label{mainZ-sub} The scheme-theoretic closure of $\cC_n(\okbar)[p^i]$ in $\cC_n$ coincides with the subgroup scheme $\cC_i$ of $\cG[p^i]$ for $1\leq i\leq n-1$.
\end{enumerate}

\item\label{mainZ-HT} If $w<(p-1)/p^n$, then the subgroup  $\cC_n(\okbar)$ coincides with the kernel of the Hodge-Tate map $\HT_{n-w(p^n-1)/(p-1)}$.

\item\label{mainZ-ram} If $w<1/(2p^{n-1})$, then the subgroup scheme $\cC_n$ coincides with the upper ramification subgroup scheme $\cG^{j+}$ for any $j$ satisfying
\[
p w(p^n-1)/(p-1)^2 \leq j <p(1-w)/(p-1).
\]
\end{enumerate}
\end{thm}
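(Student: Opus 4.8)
The strategy parallels the author's treatment for $p\ge 3$ in \cite{Ha_cansub}: transport the problem to the category of Breuil--Kisin modules and run an approximation argument there. After the harmless reduction to the case of perfect residue field, recall that, by the $p=2$ refinements of Kisin's classification (due to Kim, Lau, and Liu), a truncated Barsotti--Tate group $\cG$ of level $n$ over $\okey$ corresponds to a finite free Breuil--Kisin module $\mathfrak{M}$ over the relevant base $\mathfrak{S}_1$ with a map $\varphi_{\mathfrak{M}}\colon\varphi^{*}\mathfrak{M}\to\mathfrak{M}$ whose cokernel is killed by the Eisenstein polynomial $E(u)$ of the uniformizer, in a manner compatible with Cartier duality, base change, and the formation of $\omega$, $\deg$ and the Hodge--Tate map, and such that $\mathfrak{M}/u\mathfrak{M}$ is the contravariant Dieudonn\'e module of $\cG\times\Spec(k)$ with its $F$ and $V$. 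The first step is to set up this dictionary and to translate the hypothesis: I would show that $w=\Hdg(\cG)$ is the normalized $u$-adic distance of the matrix of $\varphi_{\mathfrak{M}}$ from the ordinary shape $\mathrm{diag}(E(u)I_{d},I_{h-d})$, so that small $w$ means $\mathfrak{M}$ is $u$-adically close to ordinary.

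The heart of the proof is the construction of $\cC_{n}$. I would look for a $\varphi_{\mathfrak{M}}$-stable $\mathfrak{S}_1$-direct summand $\mathfrak{L}\subseteq\mathfrak{M}$ of \emph{\'etale type} --- reducing modulo $u$ to the unit-root part of the Dieudonn\'e module --- and define $\cC_{n}$ to be the closed subgroup scheme of $\cG$ attached to the quotient $\mathfrak{M}\twoheadrightarrow\mathfrak{M}/\mathfrak{L}$. Writing $\varphi_{\mathfrak{M}}$ in block form relative to a lift of the ordinary decomposition --- diagonal blocks $A$ of size $d$ divisible by $E(u)$ and $D$ of size $h-d$ invertible, off-diagonal blocks $B$, $C$ --- the $\varphi$-stability of the graph of an unknown matrix $X$ becomes a Riccati-type fixed point equation $X=\Psi(X)$. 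Since $\varphi$ multiplies $u$-orders by $p$ and the $n$-th iterated Frobenius amplifies the Hodge-height obstruction by $p^{n-1}$ (the last step of the tower of $n$ Frobenii being the worst), the operator $\Psi$ --- in the version that tracks all $n$ torsion layers of $\mathfrak{M}$ --- is a $u$-adic contraction precisely when $p^{n-1}w<p/(p+1)$, i.e.\ $w<1/(p^{n-2}(p+1))$. Its fixed point produces the required $\mathfrak{L}$, hence $\cC_{n}$ of order $p^{nd}$, and the identity $\cC_{n}\times\sS_{1-p^{n-1}w}=\ker F^{n}$ follows by reducing modulo $u$ to the Dieudonn\'e side and bookkeeping the $u$-orders accumulated along the iteration.

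The remaining properties come out by order-bookkeeping together with functoriality of the Kisin correspondence. Property \ref{mainZ-deg} is the computation, in terms of $\varphi$ on $\mathfrak{M}/\mathfrak{L}$, of the length of the relevant cokernel; the exponent $(p^{n}-1)/(p-1)=1+p+\dots+p^{n-1}$ records the $n$ torsion layers. Property \ref{mainZ-iso} follows from the compatibility of $\mathfrak{L}\mapsto\mathfrak{M}/\mathfrak{L}$ with Cartier duality on Breuil--Kisin modules, and \ref{mainZ-ram1} from redoing, in the Kisin setting, the comparison of the canonical subgroup with the ramification filtration as in \cite{AM} and \cite{Ha_cansub}. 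Under the stronger bound $w<(p-1)/(p^{n}-1)$ I would check that $\cC_{n}$ becomes \'etale after a suitable base change, so that $\cC_{n}(\okbar)\cong(\bZ/p^{n}\bZ)^{d}$ (property \ref{mainZ-free}), and that the construction is compatible with passing to $p^{i}$-torsion --- the $\mathfrak{L}$ of $\cG[p^{i}]$ being the image of that of $\cG$ --- which yields \ref{mainZ-sub}. Property \ref{mainZ-HT}, under $w<(p-1)/p^{n}$, is an explicit computation of $\HT_{n-w(p^{n}-1)/(p-1)}$ from the Kisin-module descriptions of $\omega_{\cG^{\vee}}$ and of the Hodge--Tate map, and \ref{mainZ-ram}, under the cruder $w<1/(2p^{n-1})$, is the comparison with Abbes--Mokrane/Fargues ramification theory, which genuinely requires this smaller radius.

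The main obstacle is the prime $p=2$ itself, entering twice. First, the whole dictionary rests on the $p=2$ theory of Kim, Lau, and Liu, where the classical antiequivalence fails; one must verify by hand that the correspondence is exact and compatible with Cartier duality, with $\deg$, with base change and with $\HT$, and that $\mathfrak{M}/u\mathfrak{M}$ is really the Dieudonn\'e module with its $F$ and $V$. Second, at $p=2$ there is no slack in the numerology --- one cannot fall back on the crude existence bound $1/(2p^{n-1})$ --- so the contraction estimate must be done sharply, and propagating sharp $u$-order bounds through the $n$-fold iteration, and then through the duality, the torsion-compatibility, and the Hodge--Tate and ramification comparisons, is where the real work lies.
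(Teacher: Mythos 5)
Your construction sketch is plausible, but the proposal has a genuine gap exactly at the point where $p=2$ is hard: the ramification statements (1c) and (4). You propose to obtain these by ``redoing, in the Kisin setting, the comparison with the ramification filtration as in \cite{AM} and \cite{Ha_cansub}.'' The proof of precisely these statements in \cite{Ha_cansub} rests on the ramification correspondence theorem \cite[Theorem 1.1]{Ha_ramcorr}, which is proved only for $p\geq 3$; this is the step that breaks at $p=2$, and no amount of sharpening $u$-adic estimates repairs it. The paper's substitute is a different argument: it characterizes the level one canonical subgroup intrinsically as the unique finite flat closed subgroup $\cC$ such that $(\cG/\cC)^\vee\times\sS_{1-w}$ is the Frobenius kernel of $\cG^\vee\times\sS_{1-w}$ (using \cite[Proposition 1]{Fa}, valid for all $p$, in place of the congruence of defining equations), deduces that the construction commutes with finite base change (Lemma \ref{cHFrobKer}), reduces to the case $\cG(\okbar)=\cG(\okey)$, and there computes the lower ramification subgroup one $\bF_p$-line at a time via the Oort--Tate classification \cite{OT} on the equal-characteristic side (Lemma \ref{cHprop}); the upper ramification statement then follows from Tian--Fargues together with Lemma \ref{connupram}. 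Without a replacement of this kind, your parts (1c) and (4) remain unproved, and you also lose the base-change compatibility needed to make sense of the ramification computation after extending $K$.

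Two further points. First, you build $\cC_n$ in one shot by a Riccati-type fixed point on the level-$n$ Kisin module and assert that the contraction radius is ``precisely'' $p^{n-1}w<p/(p+1)$; this is asserted, not derived, and the level-$n$ module lives over $\SG/p^n\SG$ rather than $\SG_1$, so the $u$-adic estimate must be propagated through the $p$-power torsion layers, which you do not do. The paper avoids this entirely: it constructs only the level one subgroup, by lifting the conjugate Hodge filtration (Proposition \ref{conjHodge}), and obtains level $n$ by the induction of \cite[Theorem 1.1]{Ha_cansub} via anti-canonical subgroups (Proposition \ref{noncanisogZ}), which is where the factor $p^{n-1}$ in the bound actually arises. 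Second, at $p=2$ you lean on the Kim--Lau--Liu classification of arbitrary finite flat group schemes; the paper deliberately uses only Kisin's classification of connected (dually, unipotent) objects and circumvents the connectedness restriction by passing to $(\cG^0)^\vee$, which is unipotent, and dualizing back. Your route is viable in principle (Remark \ref{KimRmk} says as much), but then the compatibilities of that classification with Cartier duality, $\deg$, the Hodge--Tate map and base change all need to be verified, which you flag as an obstacle but do not carry out.
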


We also show the uniqueness of the canonical subgroup $\cC_n$ for $w<p(p-1)/(p^{n+1}-1)$ (Proposition \ref{uniqueFrnZ}). From Theorem \ref{mainZ}, we can show the following corollary just as in the proof of \cite[Corollary 1.2]{Ha_cansub}.
\begin{cor}\label{cansubfamily}
Let $K/\bQ_p$ be an extension of complete discrete valuation fields. Let $\frX$ be an admissible formal scheme over $\Spf(\okey)$ which is quasi-compact and $\frG$ be a truncated Barsotti-Tate group of level $n$ over $\frX$ of constant height $h$ and dimension $d$ with $0<d<h$. We let $X$ and $G$ denote the Raynaud generic fibers of the formal schemes $\frX$ and $\frG$, respectively. For a finite extension $L/K$ and $x\in X(L)$, we put $\frG_x=\frG\times_{\frX,x}\Spf(\oel)$, where we let $x$ also denote the map $\Spf(\oel)\to \frX$ obtained from $x$ by taking the scheme-theoretic closure and the normalization. For a non-negative rational number $r$, let $X(r)$ be the admissible open subset of $X$ defined by 
\[
X(r)(\Kbar)=\{x\in X(\Kbar)\mid \Hdg(\frG_x)<r\}.
\]
Put $r_1=p/(p+1)$ and $r_n=1/(2p^{n-1})$ for $n\geq 2$. 

Then there exists an admissible open subgroup $C_n$ of $G|_{X(r_n)}$ such that, etale locally on $X(r_n)$, the rigid-analytic group $C_n$ is isomorphic to the constant group $(\bZ/p^n\bZ)^d$ and, for any finite extension $L/K$ and $x\in X(L)$, the fiber $(C_n)_x$ coincides with the generic fiber of the level $n$ canonical subgroup of $\frG_x$.
\end{cor}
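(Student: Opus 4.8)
The plan is to globalize Theorem \ref{mainZ} by a gluing argument, exactly as in the proof of \cite[Corollary 1.2]{Ha_cansub}. Since $\frX$ is quasi-compact we may assume $\frX=\Spf(R)$ is affine. By the very definition of the Hodge height, the determinant of the Verschiebung of $\frG[p]^\vee$ on $\frX\times\sS_1$ is a global section of a line bundle on $\frX\times\sS_1$ whose vanishing order along the reduction of a point $x$ equals $\Hdg(\frG_x)$; in particular $x\mapsto\Hdg(\frG_x)$ is a continuous function on $X$. Consequently $X(r_n)$ is an admissible open subset of $X$, and it is exhausted by the quasi-compact admissible opens $X[0,r]=\{x\in X\mid\Hdg(\frG_x)\leq r\}$ for $r\in\bQ$ with $0\leq r<r_n$.

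I would then construct $C_n$ over each $X[0,r]$ and glue. Choose a quasi-compact admissible formal model $\frU$ of $X[0,r]$ together with a finite flat model $\cG$ over $\frU$ of the finite \'etale group $G|_{X[0,r]}$, and form a ramification subgroup scheme of $\cG$ in the sense of \cite{AM}. For $n\geq2$ take the level to be any rational number $j$ lying in the window of Theorem \ref{mainZ}(\ref{mainZ-ram}); since $r<r_n=1/(2p^{n-1})$, an elementary estimate shows that one such $j$ may be chosen uniformly for all $x\in X[0,r]$. For $n=1$ use instead the lower ramification subgroup scheme at level $(1-\Hdg(\frG_x))/(p-1)$ provided by Theorem \ref{mainZ}(1)(\ref{mainZ-ram1}); its defining valuation condition on points of $G$ varies continuously with $x$, so it still cuts out an admissible open subgroup $C_n^{(r)}$ of $G|_{X[0,r]}$. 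In either case, by the compatibility of ramification subgroup schemes of finite flat group schemes over complete discrete valuation rings with base change (\cite[Section 2]{AM}), for every finite extension $L/K$ and every $x\in X[0,r](L)$ the fiber $(C_n^{(r)})_x$ is the generic fiber of the level $n$ canonical subgroup of $\frG_x$ furnished by Theorem \ref{mainZ}. Since in this range the canonical subgroup is unique (Proposition \ref{uniqueFrnZ}), the subgroups $C_n^{(r)}$ agree on overlaps and glue to an admissible open subgroup $C_n$ of $G|_{X(r_n)}$ with the asserted fibers.

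Finally I would check the remaining properties. As $\frG$ is a truncated Barsotti--Tate group of level $n$, the generic fiber $G$ is finite \'etale over $X$; by construction $C_n$ is an open and closed subgroup of $G|_{X(r_n)}$ which is fiberwise of order $p^{nd}$, hence $C_n$ is finite \'etale of constant degree $p^{nd}$ over $X(r_n)$. Because $r_n\leq(p-1)/(p^n-1)$, Theorem \ref{mainZ}(2)(\ref{mainZ-free}) shows that each geometric fiber of $C_n$ is isomorphic to $(\bZ/p^n\bZ)^d$; a finite \'etale group scheme over $X(r_n)$ all of whose geometric fibers are $(\bZ/p^n\bZ)^d$ becomes the constant group after a suitable finite \'etale surjective base change, which is the asserted \'etale-local triviality.

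I expect the crux to be the second step: giving a clean meaning to the ramification subgroup scheme of the finite flat model in the family and verifying that its Raynaud generic fiber is independent of the auxiliary choices --- the formal model $\frU$, the model $\cG$ and the level $j$ --- and is compatible with passage to the individual fibers $\frG_x$ (including the scheme-theoretic closure and normalization used to define them). This rests on the base-change and closure properties of the Abbes--Mokrane filtration combined with the precise pointwise ramification description of the canonical subgroup in Theorem \ref{mainZ}; the mild extra difficulty in the range $n=1$, $1/2\leq w<p/(p+1)$, where no uniform ramification level exists, is absorbed by using the lower ramification description together with the continuity of the Hodge-height function.
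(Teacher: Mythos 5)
Your proposal is correct and follows essentially the same route as the paper, which simply defers to the proof of \cite[Corollary 1.2]{Ha_cansub}: semicontinuity of the Hodge height via the Hasse invariant, exhaustion of $X(r_n)$ by the quasi-compact loci $X[0,r]$ with $r<r_n$, identification of the fibers through the ramification-theoretic description of the canonical subgroup in Theorem \ref{mainZ} (a uniform upper level $j$ exists for $n\geq 2$ precisely because $r<1/(2p^{n-1})$, with equality $1/(2p^{n-1})=(p-1)/(p^n+p-2)$ at $p=2$ making the strictness essential), and gluing by uniqueness of the canonical subgroup. The one step you should make explicit is the admissibility of the varying-level lower tube in the range $n=1$, $1/2\leq w<p/(p+1)$: the condition $v_p(T_l(g))\geq (1-\Hdg(\frG_x))/(p-1)$ rewrites as the Weierstrass-type condition $\bigl|T_l(g)^{p-1}\,\widetilde{\Ha}(x)\bigr|\leq |p|$ for a local lift $\widetilde{\Ha}$ of the Hasse invariant, so it cuts out an affinoid subdomain of $G|_{X[0,r]}$ rather than merely a ``continuously varying'' locus.
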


The basic strategy of the proof of the main theorem is the same as in \cite{Ha_cansub}: we construct the level one canonical subgroup by lifting the conjugate Hodge filtration of a reduction of $\cG$ to the subobject of the associated Breuil-Kisin module of $\cG$. Since the canonical subgroup is required to be a lift of the Frobenius kernel of a reduction of $\cG$, it should be connected. Thus what we need is just a classification of connected finite flat group schemes allowing the case of $p=2$, which is due to Kisin (\cite{Ki_2}). Then the properties of our canonical subgroup follow easily from the construction, except the coincidence with ramification subgroups. 

In \cite{Ha_cansub}, to show that the canonical subgroups appear in the ramification filtrations of $\cG$, we use the ramification correspondence theorem (\cite[Theorem 1.1]{Ha_ramcorr}), which is shown only for $p\geq 3$. Instead, we prove the compatibility of the canonical subgroups with any finite base extension and reduce ourselves to the case of $\cG(\okbar)=\cG(\okey)$, where we can easily show that the lower ramification subgroups of $\cG$ are computed on the side of equal characteristic.



\section{Classification of unipotent finite flat group schemes}\label{ReviewZK}

In this section, we assume that the residue field $k$ of $K$ is perfect. For $p\geq 3$, we have a classification theory of Barsotti-Tate groups and finite flat group schemes over $\okey$ due to Breuil (\cite{Br}, \cite{Br_AZ}) and Kisin (\cite{Ki_Fcrys}, \cite{Ki_BT}) in terms of so-called Breuil-Kisin modules. Kisin (\cite{Ki_2}) also extended this classification to the case of $p=2$ and where groups are connected, using Zink's classification of formal Barsotti-Tate groups (\cite{Zi_W}, \cite{Zi}). In this section, we briefly recall this result of Kisin. Since we adopt a contravariant notation contrary to his, what we describe here is a classification of unipotent Barsotti-Tate groups and unipotent finite flat group schemes.

Let $W=W(k)$ be the Witt ring of $k$ and $\varphi$ be its natural Frobenius endomorphism which lifts the $p$-th power map of $k$. Natural $\varphi$-semilinear Frobenius endomorphisms of various $W$-algebras are denoted also by $\varphi$. Let $E(u)\in W[u]$ be the Eisenstein polynomial of $\pi$ over $W$. Let us fix once and for all a system $\{\pi_n\}_{n\geq 0}$ of $p$-power roots of $\pi$ in $\Kbar$ with $\pi_0=\pi$ and $\pi_{n+1}^p=\pi_n$. Put $K_\infty=\cup_{n\geq 0} K(\pi_n)$, $\SG=W[[u]]$ and $\SG_1=k[[u]]$. We write the $\varphi$-semilinear continuous ring endomorphisms of the latter two rings defined by $u\mapsto u^p$ also as $\varphi$. Then a Kisin module is an $\SG$-module $\SGm$ endowed with a $\varphi$-semilinear map $\varphi_\SGm:\SGm\to \SGm$. We write $\varphi_\SGm$ also as $\varphi$ if no confusion may occur. We follow the notation of \cite[Subsection 2.1]{Ha_ramcorr} and \cite[Subsection 2.1]{Ha_cansub}. In particular, we have categories $\ModSGffr$, $\ModSGf$, $\ModSGfinf$ of Kisin modules of $E$-height $\leq 1$ and a category $\Mod_{/B}^{1,\varphi}$ for any $k[[u]]$-algebra $B$.

Let $\SGm$ be an object of the category $\ModSGfinf$ and put $\varphi^{*}\SGm=\SG\otimes_{\varphi,\SG}\SGm$. Then the map $1\otimes\varphi:\varphi^{*}\SGm\to \SGm$ is injective and we have a unique map $\psi_\SGm: \SGm\to \varphi^{*}\SGm$ satisfying $(1\otimes\varphi)\circ \psi_\SGm=E(u)$. We say $\SGm$ is $V$-nilpotent if the composite 
\[
\varphi^{n-1*}(\psi_\SGm)\circ \varphi^{n-2*}(\psi_\SGm)\circ\cdots\circ\psi_\SGm: \SGm\to \varphi^{n*}\SGm
\]
factors through the submodule $(p,u)\varphi^{n*}\SGm$ for any sufficiently large $n$. Similarly, we say an object $\SGm$ of the category $\ModSGffr$ is topologically $V$-nilpotent if the same condition holds. The full subcategories of $V$-nilpotent ({\it resp.} topologically $V$-nilpotent) objects are denoted by $\ModSGfV$ and $\ModSGfinfV$ ({\it resp.} $\ModSGffrV$). Note that these notions are called connected and formal in \cite{Ki_2}, respectively.

Let $S$ be the $p$-adic completion of the divided power envelope of $W[u]$ with respect to the ideal $(E(u))$. The ring $S$ has a natural filtration $\Fil^1S$ induced by the divided power structure, a $\varphi$-semilinear Frobenius endomorphism denoted also by $\varphi$ and a $\varphi$-semilinear map $\varphi_1:\Fil^1S\to S$, since the inclusion $\varphi(\Fil^1S)\subseteq pS$ holds for any $p$. Then a Breuil module is an $S$-module $\cM$ endowed with an $S$-submodule $\Fil^1\cM$ containing $(\Fil^1S)\cM$ and a $\varphi$-semilinear map $\varphi_{1,\cM}: \Fil^1\cM\to \cM$ satisfying some compatibility conditions. The map $\varphi_{1,\cM}$ is also denoted by $\varphi_1$ if there is no risk of confusion. We also have categories of Breuil modules $\ModSffr$, $\ModSf$ and $\ModSfinf$ (for the definitions, see \cite[Subsection 2.1]{Ha_ramcorr}. Though there these are defined only for $p\geq 3$, the definitions are valid also for the case of $p=2$). For any object $\cM$ of these categories, we define a $\varphi$-semilinear map $\varphi_\cM:\cM\to \cM$ by $\varphi_\cM(x)=\varphi_1(E(u))^{-1}\varphi_1(E(u)x)$, which we abusively write as $\varphi$. 

Let $\cM$ be an object of the category $\ModSffr$ and put $\varphi^{*}\cM=S\otimes_{\varphi,S}\cM$. Then the map $1\otimes\varphi:\varphi^*\cM\to\cM$ is injective and we have a unique map $\psi_\cM:\cM\to \varphi^*\cM$ satisfying $(1\otimes\varphi)\circ \psi_\cM=p$. Then we say $\cM$ is topologically $V$-nilpotent if the composite
\[
\varphi^{n-1*}(\psi_\cM)\circ \varphi^{n-2*}(\psi_\cM)\circ\cdots\circ\psi_\cM: \cM\to \varphi^{n*}\cM
\]
factors through the submodule $(p,\Fil^1S)\varphi^{n*}\cM$ for any sufficiently large $n$. This notion is called $S$-window over $\okey$ in \cite{Ki_2} and \cite{Zi_W}. The full subcategory of topologically $V$-nilpotent objects is denoted by $\ModSffrV$. For any Kisin module $\SGm$, define a Breuil module $\cM_\SG(\SGm)=S\otimes_{\varphi,\SG}\SGm$ by putting
\begin{align*}
&\Fil^1\cM_\SG(\SGm)=\Ker(S\otimes_{\varphi,\SG}\SGm\overset{1\otimes\varphi}{\to} S/\Fil^1 S\otimes_\SG \SGm),\\
&\varphi_1:\Fil^1\cM_\SG(\SGm) \overset{1\otimes\varphi}{\to} \Fil^1 S\otimes_\SG \SGm \overset{\varphi_1\otimes 1}{\to}\cM_\SG(\SGm).
\end{align*}
This gives exact functors $\ModSGfinf \to \ModSfinf$ and $\ModSGffr\to \ModSffr$, which are both denoted by $\cM_\SG(-)$, and the latter induces a functor $\ModSGffrV\to \ModSffrV$ (\cite[Proposition 1.2.5]{Ki_2}).

We can associate Galois representations to Kisin and Breuil modules. Consider the ring $R=\varprojlim (\tokbar \gets \tokbar\gets \cdots)$, where the transition maps are $p$-th power maps. An element $r\in R$ is written as $r=(r_n)_{n\geq 0}$ with $r_n\in \tokbar$, and define $r^{(0)}\in \oc$ by $r^{(0)}=\lim_{n\to\infty} \hat{r}_n^{p^n}$, where $\hat{r}_n$ is a lift of $r_n$ in $\okbar$. Then the ring $R$ is a complete valuation ring of characteristic $p$ with its valuation defined by $v_R(r)=v_p(r^{(0)})$ whose fraction field is algebraically closed, and we put $m_R^{\geq i}=\{r\in R\mid v_R(r)\geq i\}$ and $R_i=R/m_R^{\geq i}$. We have a natural ring surjection $W(R)\to \oc$ which lifts the zeroth projection $\prjt_0: R\to \tokbar$. The ring $\Acrys$ is the $p$-adic completion of the divided power envelope of $W(R)$ with respect to the kernel of this surjection. Thus we have the induced surjection $\Acrys\to \oc$. Put $\upi=(\pi,\pi_1,\pi_2,\ldots)\in R$ and consider the rings $W(R)$ and $\Acrys$ as $\SG$-algebras by the map $u\mapsto [\upi]$. In particular, we consider the ring $k[[u]]$ as a subring of $R$ by the map $u\mapsto \upi$ and let $v_R$ also denote the induced valuation on the former ring, which satisfies $v_R(u)=1/e$.

For any objects $\SGm$ of the category $\ModSGffr$ and $\cM$ of $\ModSffr$, we associate to them $G_{K_\infty}$-modules
\begin{align*}
T^*_\SG(\SGm)&=\Hom_{\SG, \varphi}(\SGm, W(R)),\\
\Tcrys(\cM)&=\Hom_{S,\Fil^1,\varphi}(\cM,\Acrys)
\end{align*}
(\cite[Proposition B1.8.3]{F_PhiGamma} and \cite[Subsection 1.2.6]{Ki_2}). If the $\SG$-module $\SGm$ is free of rank $h$, then the $\bZ_p$-module $\TSG(\SGm)$ is also free of rank $h$ (\cite[Corollary 2.1.4]{Ki_Fcrys}). We also have a natural injection of $G_{K_\infty}$-modules $\TSG(\SGm)\to \Tcrys(\cM_\SG(\SGm))$ defined by $f\mapsto 1\otimes (\varphi\circ f)$ and this is a bijection if $\SGm$ is topologically $V$-nilpotent (\cite[Proposition 1.2.7]{Ki_2}). Similarly, for any object $\SGm$ of the category $\ModSGfinf$, we have the associated $G_{K_\infty}$-module
\[
T^*_\SG(\SGm)=\Hom_{\SG, \varphi}(\SGm, W(R)\otimes \bQ_p/\bZ_p).
\]

Put $\sS_n=\Spec(\cO_{K,n})$, $S_n=S/p^n S$ and $E_n=\Spec(S_n)$. Let us consider the big crystalline site $\CRYS(\sS_n/E_n)$ with the fppf topology and its topos $(\sS_n/E_n)_\CRYS$. For any Barsotti-Tate group $\Gamma$ over $\okey$, we have the contravariant Dieudonn\'{e} crystal $\bD^{*}(\Gamma\times\sS_n)=\sExt^1_{\sS_n/E_n}(\underline{\Gamma\times\sS_n},\cO_{\sS_n/E_n})$ (for the notation, see \cite{BBM}). We put
\[
\bD^{*}(\Gamma)(S\to \okey)=\varprojlim_n \bD^{*}(\Gamma\times\sS_n)(S_n\to \cO_{K,n}).
\]
This module is considered as an object $\Mod(\Gamma)$ of the category $\ModSffr$ with the natural $\varphi$-semilinear Frobenius map induced by the Frobenius of $\Gamma\times \sS_1$ and the filtration defined as the inverse image of the natural inclusion
\[
\omega_\Gamma \subseteq \varprojlim_n \bD^{*}(\Gamma\times\sS_n)(\cO_{K,n}\to \cO_{K,n}).
\]
The $\Acrys$-module
\[
\bD^{*}(\Gamma)(\Acrys\to \oc)=\varprojlim_n \bD^{*}(\Gamma\times\sS_n)(\Acrys/p^n\Acrys\to \cO_{\bC,n})
\]
also has a $\varphi$-semilinear Frobenius map and a filtration defined in the same way.
Similarly, for any finite flat group scheme $\cG$ over $\okey$, the $S$-module 
\[
\bD^{*}(\cG)(S\to \okey)=\varprojlim_n \bD^{*}(\cG\times \sS_n)(S_n\to \cO_{K,n})
\]
is endowed with a natural $\varphi$-semilinear Frobenius map which is induced by the Frobenius of the group scheme $\cG\times \sS_1$ and is also denoted by $\varphi$, and a filtration defined by the submodule
\[
\Fil^1\bD^{*}(\cG)(S\to \okey)=\varprojlim_n \sExt^1_{\sS_n/E_n}(\underline{\cG\times\sS_n},\cJ_{\sS_n/E_n})(S_n\to \cO_{K,n}),
\]
where $\cJ_{\sS_n/E_n}$ is the canonical divided power ideal sheaf of the structure sheaf $\cO_{\sS_n/E_n}$.

We say a Barsotti-Tate group or a finite locally free group scheme is unipotent if its Cartier dual is connected. We let $(\mathrm{BT}/\okey)^\mathrm{u}$ ({\it resp.} $(p\text{-}\mathrm{Gr}/\okey)^\mathrm{u}$) denote the category of unipotent Barsotti-Tate groups ({\it resp.} the category of unipotent finite flat group schemes killed by some $p$-power) over $\okey$.
If a Barsotti-Tate group $\Gamma$ over $\okey$ is unipotent, then the object $\Mod(\Gamma)$ is topologically $V$-nilpotent (\cite[Lemma 1.1.3]{Ki_2}). Moreover, we have the following classification theorem of unipotent Barsotti-Tate groups and unipotent finite flat group schemes, whose second assertion follows from the first assertion by an argument of taking a resolution (\cite[Subsection 1.3]{Ki_2}). 

\begin{thm}\label{KiZ}
\begin{enumerate}
\item\label{KiZ-1}(\cite{Ki_2}, Theorem 1.2.8)
There exists an anti-equivalence of exact categories
\[
\cG(-): \ModSGffrV \to (\mathrm{BT}/\okey)^{\mathrm{u}}
\]
with a natural isomorphism $\Mod(\cG(\SGm))\to \cM_\SG(\SGm)$. Moreover, we also have a natural isomorphism of $G_{K_\infty}$-modules 
\[
\varepsilon_\SGm: T_p(\cG(\SGm))\to \Tcrys(\cM_\SG(\SGm)).
\]
\item\label{KiZ-2}(\cite{Ki_2}, Theorem 1.3.9) There exists an anti-equivalence of exact categories
\[
\cG(-): \ModSGfinfV \to (p\text{-}\mathrm{Gr}/\okey)^{\mathrm{u}}
\]
with a natural isomorphism of $G_{K_\infty}$-modules 
\[
\varepsilon_\SGm: \cG(\SGm)(\okbar) \to \TSG(\SGm).
\]
\end{enumerate}
\end{thm}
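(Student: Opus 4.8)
This is Kisin's theorem, so the plan is to recall how his argument is organized; the reason to funnel everything through Zink's theory of displays is precisely that the latter is insensitive to whether $p=2$.

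\emph{Assertion (\ref{KiZ-1}).} We already know that $\Mod(-)$ carries unipotent Barsotti--Tate groups over $\okey$ into $\ModSffrV$ (\cite[Lemma 1.1.3]{Ki_2}) and that $\cM_\SG(-)$ carries $\ModSGffrV$ into $\ModSffrV$ (\cite[Proposition 1.2.5]{Ki_2}). The two things to prove are that each of these functors is an (anti-)equivalence. For $\Mod(-)$ I would invoke Zink: objects of $\ModSffrV$ are exactly $S$-windows over $\okey$ (as remarked after the definition of $\ModSffrV$), and Zink's classification of formal $p$-divisible groups by displays (\cite{Zi}), combined with the window formalism comparing displays and $S$-windows (\cite{Zi_W}), yields an equivalence between $S$-windows over $\okey$ and the relevant Barsotti--Tate groups; passing to the contravariant normalization used here turns this into an anti-equivalence $(\mathrm{BT}/\okey)^{\mathrm{u}}\xrightarrow{\sim}\ModSffrV$ realized by $\Mod(-)$. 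For $\cM_\SG(-)$ one shows it is an equivalence $\ModSGffrV\to\ModSffrV$ by a faithfully-flat descent / deformation argument along $\SG=W[[u]]\to S$, essentially as in \cite{Ki_Fcrys} but restricted to the $V$-nilpotent subcategories, where both sides are pinned down by $\varphi$-modules over $W$ after reduction; this is carried out in \cite{Ki_2}. Composing a quasi-inverse of $\cM_\SG(-)$ with a quasi-inverse of $\Mod(-)$ defines $\cG(-):\ModSGffrV\to(\mathrm{BT}/\okey)^{\mathrm{u}}$, and by construction $\Mod(\cG(\SGm))$ is canonically $\cM_\SG(\SGm)$.

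\emph{The isomorphism $\varepsilon_\SGm$ in (\ref{KiZ-1}).} For a Barsotti--Tate group $\Gamma$ over $\okey$, evaluating the Dieudonn\'{e} crystal $\bD^{*}(\Gamma)$ on $\Acrys\to\oc$ identifies $T_p(\Gamma)$ with $\Tcrys(\Mod(\Gamma))$ as a $G_{K_\infty}$-module (Faltings' comparison; cf.\ the references around \cite[Subsection 1.2.6]{Ki_2}). Applied to $\Gamma=\cG(\SGm)$ and combined with $\Mod(\cG(\SGm))\simeq\cM_\SG(\SGm)$ this gives $T_p(\cG(\SGm))\simeq\Tcrys(\cM_\SG(\SGm))$; compatibility with $\TSG(\SGm)$ is then read off from the natural injection $\TSG(\SGm)\to\Tcrys(\cM_\SG(\SGm))$, which is bijective because $\SGm$ is topologically $V$-nilpotent (\cite[Proposition 1.2.7]{Ki_2}).

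\emph{Assertion (\ref{KiZ-2}).} This is deduced from (\ref{KiZ-1}) by the usual resolution argument (\cite[Subsection 1.3]{Ki_2}). Any object $\SGm$ of $\ModSGfinfV$ fits into a two-term exact sequence $0\to\SGn'\to\SGn\to\SGm\to 0$ with $\SGn,\SGn'$ free objects of $\ModSGffrV$; applying $\cG(-)$ from (\ref{KiZ-1}) one sets $\cG(\SGm)=\Ker(\cG(\SGn)\to\cG(\SGn'))$, a finite flat group scheme killed by a power of $p$. Exactness of $\cG(-)$ and a short diagram chase show that $\cG(\SGm)$ is unipotent, that the construction is independent of the resolution, and that $\cG(-)$ so defined is an anti-equivalence onto $(p\text{-}\mathrm{Gr}/\okey)^{\mathrm{u}}$. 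Finally, applying the snake lemma to the sequence of Galois modules obtained from the resolution, together with $\varepsilon$ from (\ref{KiZ-1}), gives the natural isomorphism $\cG(\SGm)(\okbar)\simeq\TSG(\SGm)=\Hom_{\SG,\varphi}(\SGm,W(R)\otimes\bQ_p/\bZ_p)$. The main obstacle, and the reason the case $p=2$ is not handled by Breuil's original methods, is the pair of equivalences in (\ref{KiZ-1}): for $p\geq 3$ one can argue directly inside the Breuil-module formalism, but for $p=2$ the divided-power combinatorics of $S$ are too coarse, so one must import Zink's display theory (which is characteristic-free) and match its window-theoretic nilpotence conditions with the topologically $V$-nilpotent conditions used here, simultaneously tracking the Frobenius structures, the filtration $\Fil^1$, and the $G_{K_\infty}$-actions. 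Getting this dictionary exactly right, including the descent from $S$ to $\SG$, is the technical heart.
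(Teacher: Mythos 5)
The paper does not prove this theorem itself: it is quoted directly from Kisin (\cite{Ki_2}, Theorems 1.2.8 and 1.3.9), with only the remark that the second assertion follows from the first by a resolution argument. Your outline — Zink's display/window theory for the unipotent (formal) case, the equivalence $\cM_\SG(-)$ restricted to the $V$-nilpotent subcategories, the comparison of $T_p$ with $\Tcrys$ via \cite[Proposition 1.2.7]{Ki_2}, and the two-term resolution for the torsion case — is a faithful summary of exactly that cited proof, so it takes essentially the same approach as the paper.
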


On the other hand, for any $k[[u]]$-algebra $B$, we have an exact anti-equivalence $\cH(-)$ from the category $\Mod_{/B}^{1,\varphi}$ to a category of finite locally free group schemes over $B$ whose Verschiebung is the zero map (\cite[Th\'eor\`eme 7.4]{SGA3-7A}. See also \cite[Subsection 3.2]{Ha_ramcorr}). Moreover, for $B=k[[u]]$ and $\SGm\in \ModSGfV$, the anti-equivalences $\cG(-)$ and $\cH(-)$ are related by the natural isomorphism 
\[
\varepsilon_\SGm:\cG(\SGm)(\okbar) \to \TSG(\SGm)=\cH(\SGm)(R)
\] 
of Theorem \ref{KiZ} (\ref{KiZ-2}). We define the lower ramification subgroups of the group scheme $\cH(\SGm)$ to be adapted to the valuation $v_R$. Namely, we define
\[
\cH(\SGm)_i(R)=\Ker(\cH(\SGm)(R)\to\cH(\SGm)(R_i))
\]
for any positive rational number $i$. We also define $\deg(\cH(\SGm))$ by writing $\omega_{\cH(\SGm)}\simeq \oplus_i k[[u]]/(a_i)$ and putting $\deg(\cH(\SGm))=\sum_i v_R(a_i)$.

Let $\SGm$ be an object of the category $\ModSGffrV$. If we identify an element $g\in T_p(\cG(\SGm))$ with a homomorphism of Barsotti-Tate groups from $\bQ_p/\bZ_p$ to $\cG(\SGm)$ over $\oc$, then by the natural isomorphism $\Mod(\cG(\SGm))\to \cM_\SG(\SGm)$ the element $\varepsilon_\SGm(g)$ is identified with the induced map
\[
\bD^{*}(g): \bD^{*}(\cG(\SGm))(\Acrys\to \oc) \to \bD^{*}(\bQ_p/\bZ_p)(\Acrys\to \oc)=\Acrys.
\]
A similar argument as in the proof of \cite[Proposition 1.1.11]{Ki_BT} shows that for any exact sequence
\[
0 \to \SGn \to \SGn' \to \SGm \to 0
\]
of Kisin modules such that $\SGn$ and $\SGn'$ are objects of the category of $\ModSGffr$ and $\SGm$ is of $\ModSGfinf$, the functor $\cM_\SG(-)$ induces an exact sequence of Breuil modules
\[
0 \to \cM_\SG(\SGn) \to \cM_\SG(\SGn') \to \cM_\SG(\SGm) \to 0.
\]
This and \cite[Lemme 4.2.5 (ii)]{BBM} imply that, for any object $\SGm$ of the category $\ModSGfinfV$, there exists a natural isomorphism of $S$-modules 
\[
\bD^{*}(\cG(\SGm))(S\to \okey) \to \cM_\SG(\SGm)
\] 
which is compatible with $\Fil^1$ and $\varphi$.

Let us consider the $k$-algebra isomorphism $k[[u]]/(u^e)\to \tokey$ defined by $u\mapsto \pi$. Using this isomorphism, we identify the $k$-algebras of both sides. Then we can show the following lemma just as in the proof of \cite[Lemma 2.4]{Ha_cansub}.

\begin{lem}\label{HodgeZ}
\begin{enumerate}
\item \label{HodgeZ-BT}
Let $\cG$ be a unipotent truncated Barsotti-Tate group of level one over $\okey$ and $\SGm$ be the corresponding object of $\ModSGfV$ via the anti-equivalence $\cG(-)$. Then there exist natural isomorphisms of $\tokey$-modules
\[
\Fil^1\cM_\SG(\SGm)/(\Fil^1S)\cM_\SG(\SGm)\to \omega_{\cG},\quad \cM_\SG(\SGm)/\Fil^1\cM_\SG(\SGm)\to \Lie(\cG^\vee).
\]
\item \label{HodgeZ-p}
Let $\cG$ be a unipotent finite flat group scheme over $\okey$ killed by $p$ and $\SGm$ be the corresponding object of the category $\ModSGfV$ via the anti-equivalence $\cG(-)$. Then there exists a natural isomorphism of $\tokey$-modules $\Fil^1\cM_\SG(\SGm)/(\Fil^1S)\cM_\SG(\SGm)\to \omega_{\cG}$.
\item \label{HodgeZ-deg}
Let $\SGm$ be an object of the category $\ModSGfV$. Then we have the equalities
\[
\deg(\cG(\SGm))=\deg(\cH(\SGm))=v_R(\det(\varphi_\SGm)).
\]
\end{enumerate}
\end{lem}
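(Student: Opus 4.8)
The plan is to follow the proof of \cite[Lemma 2.4]{Ha_cansub} essentially verbatim, checking at each step that the input it invokes --- Kisin's classification (Theorem~\ref{KiZ}), the comparison of Dieudonn\'e crystals with Breuil modules through $\cM_\SG(-)$, the $\Fil^1$-compatibilities of \cite{BBM}, and the classification of finite locally free group schemes with vanishing Verschiebung (\cite[Th\'eor\`eme 7.4]{SGA3-7A}) --- is one of the facts recalled in Section~\ref{ReviewZK}, all of which are valid for $p=2$ as well. No genuinely new ingredient is needed; the content is precisely that the $p=2$ versions of these facts are now available, via \cite{Ki_2}.

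For (\ref{HodgeZ-BT}) and (\ref{HodgeZ-p}) I would start from the natural $\Fil^1$- and $\varphi$-compatible isomorphism of $S$-modules $\bD^{*}(\cG(\SGm))(S\to\okey)\simeq\cM_\SG(\SGm)$ recalled just before the lemma, which applies to an $\SGm$ killed by $p$ by regarding it as an object of $\ModSGfinfV$. Reducing this isomorphism modulo $\Fil^1 S$ and using that $\bD^{*}(\cG(\SGm))$ is a crystal --- so that its value on $S\to\okey$ reduced modulo $\Fil^1 S$ is canonically its value on the trivial divided power thickening $\okey\to\okey$ --- I obtain an isomorphism $\cM_\SG(\SGm)/(\Fil^1 S)\cM_\SG(\SGm)\simeq\bD^{*}(\cG(\SGm))(\okey\to\okey)$ carrying the image of $\Fil^1\cM_\SG(\SGm)$ onto the Hodge filtration. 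By the definition of the filtration on $\bD^{*}$ (the inverse image of $\omega$ in the Barsotti-Tate case, and the submodule built from the divided power ideal sheaf $\cJ$, which is $\omega_{\cG}$ by \cite{BBM}, in the finite flat case) this gives the first isomorphism in both parts. For the assertion about $\Lie(\cG^\vee)$ in (\ref{HodgeZ-BT}) I would then invoke the Hodge exact sequence $0\to\omega_{\cG}\to\bD^{*}(\cG(\SGm))(\okey\to\okey)\to\Lie(\cG(\SGm)^\vee)\to 0$ for a truncated Barsotti-Tate group of level one (\cite{BBM}), which identifies the quotient $\cM_\SG(\SGm)/\Fil^1\cM_\SG(\SGm)$ with $\Lie(\cG(\SGm)^\vee)$.

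For (\ref{HodgeZ-deg}) I would argue in two steps. First, by the SGA\,$3$ construction of the anti-equivalence $\cH(-)$ (\cite[Th\'eor\`eme 7.4]{SGA3-7A}, \cite[Subsection 3.2]{Ha_ramcorr}) the module $\omega_{\cH(\SGm)}$ is canonically identified with the cokernel of the linearized Frobenius $1\otimes\varphi_\SGm\colon\varphi^{*}\SGm\to\SGm$, a map of free $k[[u]]$-modules of rank $h$; writing $1\otimes\varphi_\SGm$ as a matrix $A$ in a basis of $\SGm$ (which also computes $\det\varphi_\SGm$) one gets $\omega_{\cH(\SGm)}\simeq\oplus_i k[[u]]/(a_i)$ with $\prod_i a_i$ a unit multiple of $\det A=\det\varphi_\SGm$, whence $\deg(\cH(\SGm))=\sum_i v_R(a_i)=v_R(\det\varphi_\SGm)$; here $v_R(\det\varphi_\SGm)$ is independent of the basis because a change of basis multiplies $\det\varphi_\SGm$ by an element of the form $\varphi(b)/b$ with $b\in k[[u]]^{\times}$, hence of $v_R$-valuation $0$. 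Secondly, a direct computation identifies $\Fil^1\cM_\SG(\SGm)/(\Fil^1 S)\cM_\SG(\SGm)$ with $\Coker(1\otimes\varphi_\SGm\colon\varphi^{*}\SGm\to\SGm)$ as well: after reduction modulo $\Fil^1 S$ this module becomes the kernel of $1\otimes\varphi_\SGm$ tensored with $\okey=\SG/(E(u))$, and the long exact $\Tor$-sequence attached to $0\to\varphi^{*}\SGm\to\SGm\to\Coker(1\otimes\varphi_\SGm)\to 0$ --- using that $\SGm$ and $\varphi^{*}\SGm$ are $u$-torsion free and that $\Coker(1\otimes\varphi_\SGm)$ is killed by $E(u)$ --- identifies that kernel with $\Tor_1^{\SG}(\Coker(1\otimes\varphi_\SGm),\okey)=\Coker(1\otimes\varphi_\SGm)$. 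Combined with (\ref{HodgeZ-p}) this yields a canonical isomorphism $\omega_{\cG(\SGm)}\simeq\omega_{\cH(\SGm)}$ of modules over $\tokey\simeq k[[u]]/(u^e)$, and since $v_p(\pi^j)=v_R(u^j)=j/e$ under this identification we conclude $\deg(\cG(\SGm))=\deg(\cH(\SGm))$.

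The main obstacle is not conceptual but a matter of bookkeeping: keeping the contravariant Dieudonn\'e, Cartier-dual, and $\omega$-versus-$\Lie$ conventions consistent through the chain of comparisons, and checking that the filtered crystal--Breuil comparison and the identification of $\Fil^1\cM_\SG(\SGm)/(\Fil^1 S)\cM_\SG(\SGm)$ with a Frobenius cokernel use nothing about $p$ beyond what Section~\ref{ReviewZK} provides. Once this is granted the proof is word for word that of the case $p\geq 3$.
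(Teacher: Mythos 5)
Your proposal is correct and follows the same route as the paper, which itself only says the lemma is proved ``just as in the proof of \cite[Lemma 2.4]{Ha_cansub}'' using the $p=2$ inputs assembled in Section~\ref{ReviewZK} (the filtered isomorphism $\bD^{*}(\cG(\SGm))(S\to\okey)\to\cM_\SG(\SGm)$, the crystal property, and the SGA~3 description of $\omega_{\cH(\SGm)}$ as the cokernel of the linearized Frobenius). The details you supply --- in particular the $\Tor$-sequence identification of $\Fil^1\cM_\SG(\SGm)/(\Fil^1S)\cM_\SG(\SGm)$ with $\Coker(1\otimes\varphi_\SGm)$, using that this cokernel is killed by $E(u)\equiv u^e$ --- are the expected ones and are sound.
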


Let $\cG$ be a unipotent truncated Barsotti-Tate group of level one, height $h$ and dimension $d$ over $\okey$ with $0<d<h$ and Hodge height $\Hdg(\cG)=w$. Let $\SGm$ be the object of the category $\ModSGfV$ corresponding to $\cG$ via the anti-equivalence $\cG(-)$. Put $\SGm_1=\SGm/u^e\SGm$ and 
\[
\Fil^1\SGm_1=\Img(1\otimes \varphi:\tokey\otimes_{\varphi, \tokey}\SGm_1\to \SGm_1).
\]
Then the modules $\SGm_1$ and $\Fil^1\SGm_1$ are naturally considered as objects of the category $\Mod_{/\tokey}^{1,\varphi}$. By Lemma \ref{HodgeZ} (\ref{HodgeZ-BT}), we can show that there exists a natural isomorphism of $\tokey$-modules $\Lie(\cG^\vee) \to \Fil^1\SGm_1$ as in \cite[Subsection 2.3]{Ha_cansub}. Thus the $\tokey$-module $\Fil^1\SGm_1$ is free of rank $h-d$. Moreover, we obtain an exact sequence of $\varphi$-modules over $\tokey$ 
\[
0\to \Fil^1\SGm_1 \to \SGm_1 \to \SGm_1/\Fil^1\SGm_1\to 0
\]
which splits as a sequence of $\tokey$-modules and the equality of truncated valuation $v_p(\det(\varphi_{\Fil^1\SGm_1}))=w$. We can also prove the following lemma as in the proof of \cite[Lemma 2.5]{Ha_cansub}.

\begin{lem}\label{HTmapZ}
Let $\cG$ be a unipotent truncated Barsotti-Tate group of level one over $\okey$ and $\SGm$ be the object of $\ModSGfV$ which corresponds to $\cG$ via the anti-equivalence $\cG(-)$. 
Then the composite
\[
\cG(\okbar)\overset{\varepsilon_\SGm}{\to} \cH(\SGm)(R)\to \Hom_{\tokey}(\Fil^1\SGm_1, R/m_R^{\geq i})\to \omega_{\cG^\vee}\otimes \cO_{\Kbar,i}
\]
coincides with the Hodge-Tate map $\HT_i$ for any $i\leq 1$.
\end{lem}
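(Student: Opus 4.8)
The plan is to reduce the statement to the standard description of the Hodge--Tate map through the contravariant crystalline Dieudonn\'e functor, arguing exactly as in the proof of \cite[Lemma 2.5]{Ha_cansub}; the only difference is that one now invokes Theorem \ref{KiZ} in place of the classification of finite flat group schemes used there for $p\geq 3$.

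I would first make the three arrows of the composite explicit. By the construction of the functor $\cH(-)$ from \cite[Th\'eor\`eme 7.4]{SGA3-7A}, the element $\varepsilon_\SGm(g)\in\cH(\SGm)(R)$ is a $\varphi$-equivariant $k[[u]]$-linear homomorphism $u_g\colon\SGm\to R$. Since $v_R(u^e)=1\geq i$ and $u_g$ is $k[[u]]$-linear, the reduction of $u_g$ modulo $m_R^{\geq i}$ kills $u^e\SGm$, hence factors through $\SGm_1$; the second arrow sends $g$ to the restriction of this reduction to the submodule $\Fil^1\SGm_1$. The third arrow composes with the projection $\prjt_0\colon R/m_R^{\geq i}\to\okbari$, which exists and is compatible with valuations precisely when $i\leq1$, and then uses the natural isomorphism $\Fil^1\SGm_1\simeq\Lie(\cG^\vee)$ recalled before the lemma together with $\Hom_{\tokey}(\Lie(\cG^\vee),\okbari)=\omega_{\cG^\vee}\otimes\okbari$, the latter because $\Lie(\cG^\vee)$ is a free $\tokey$-module dual to $\omega_{\cG^\vee}$ for a truncated Barsotti--Tate group of level one.

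Next I would pass to the crystalline side. Using the natural isomorphism $\bD^{*}(\cG(\SGm))(S\to\okey)\to\cM_\SG(\SGm)$ compatible with $\Fil^1$ and $\varphi$, its $\Acrys$-analogue, and the identification --- recalled just before Lemma \ref{HodgeZ} --- of $\varepsilon_\SGm(g)$ with the homomorphism $\bD^{*}(g)$ induced on Dieudonn\'e crystals (regarding $g$ over $\oc$ via $\cG(\okbar)=\cG(\oc)$; in the level-one case this follows from the Barsotti--Tate case by taking a resolution), one recovers $u_g$ from $\bD^{*}(g)\colon\bD^{*}(\cG(\SGm))(\Acrys\to\oc)\to\Acrys$ through the map $f\mapsto 1\otimes(\varphi\circ f)$ defining $\TSG(\SGm)\to\Tcrys(\cM_\SG(\SGm))$ and the reduction $W(R)\to R$. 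Passing through Cartier duality to view $g$ as the homomorphism $\cG^\vee\times\Spec\oc\to\mu_p\times\Spec\oc$ it represents, and invoking $\Fil^1\bD^{*}(H)/(\Fil^1S)\bD^{*}(H)=\omega_H$ from Lemma \ref{HodgeZ} (and its $\Acrys$-version), the restriction of $\bD^{*}(g)$ to $\Fil^1$ passed to $\Fil^1/(\Fil^1S)$ is precisely the pullback map on invariant differentials $g^{*}\colon\omega_{\mu_p}\otimes\oc\to\omega_{\cG^\vee}\otimes\oc$, which carries the canonical generator $dT/T$ to $g^{*}(dT/T)$. Matching the reduction modulo the divided-power ideal of $\Acrys$ with the reduction modulo $m_R^{\geq i}$, compatibly with $\Acrys\to\oc$ and $R\to\tokbar$ for $i\leq1$, then shows that the composite of the lemma sends $g$ to $g^{*}(dT/T)$ modulo $m_{\Kbar}^{\geq i}$, which is $\HT_i(g)$.

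The step I expect to be the main obstacle is the purely formal bookkeeping of these comparison isomorphisms --- between $\TSG(\SGm)$ and $\Tcrys(\cM_\SG(\SGm))$, between $\SGm$, $\cM_\SG(\SGm)$ and $\bD^{*}(\cG)(S\to\okey)$, and between $\Fil^1\SGm_1$ and $\Lie(\cG^\vee)$ --- together with the Cartier duality interchanging the descriptions $\cG(\okbar)=\Hom(\cG^\vee\times\Spec\oc,\mu_p\times\Spec\oc)$ and $\cG(\okbar)=\Hom(\bZ/p\bZ,\cG\times\Spec\oc)$, and verifying that all of them are compatible with $\varphi$, the filtrations, and the base changes $\SG\to\Acrys\to\oc$ and $k[[u]]\hookrightarrow R\to\tokbar$. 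Nothing in this is sensitive to the value of $p$: the isomorphism $\bD^{*}(\cG(\SGm))(S\to\okey)\to\cM_\SG(\SGm)$, the compatibility of $\varepsilon_\SGm$ with crystalline evaluation, and Theorem \ref{KiZ} are all available for $p=2$ by Kisin's work, so the verification is word-for-word the one carried out in \cite[Lemma 2.5]{Ha_cansub}.
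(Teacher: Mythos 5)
Your proposal is correct and follows essentially the same route as the paper, whose proof of this lemma is precisely the deferral to \cite[Lemma 2.5]{Ha_cansub} using the $p=2$ ingredients assembled in Section \ref{ReviewZK} (the identification of $\varepsilon_\SGm(g)$ with $\bD^{*}(g)$, the isomorphism $\bD^{*}(\cG(\SGm))(S\to\okey)\to\cM_\SG(\SGm)$ via the resolution argument, and Lemma \ref{HodgeZ}). Your explicit unwinding of the three arrows, the role of $i\leq 1$ for $\prjt_0$, and the Cartier-duality bookkeeping matching $\bD^{*}(g)$ with $g^{*}(dT/T)$ is exactly the verification the paper has in mind.
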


\begin{rmk}\label{KimRmk}
A similar classification for finite flat group schemes over $\okey$ via Breuil-Kisin modules allowing the case of $p=2$ and with non-trivial etale part is obtained independently by Kim (\cite{Kim_2}), Lau (\cite{Lau_2}) and Liu (\cite{Li_2}). By using \cite[Corollary 4.3]{Kim_2}, we can generalize Lemma \ref{HodgeZ} and Lemma \ref{HTmapZ} to this case.
\end{rmk}





\section{Canonical subgroups}\label{Cansub}

In this section, we prove Theorem \ref{mainZ}. The proof is a modification of the argument in \cite{Ha_cansub}, where we had to exclude the case of $p=2$. We begin with a consideration on a dual situation, as below. By this passage to the dual, we can replace the use of the congruence of the defining equations of $\cG(\SGm)$ and $\cH(\SGm)$ (\cite[Corollary 4.6]{Ha_ramcorr}) in \cite{Ha_cansub} to show the uniqueness of the level one canonical subgroup, by that of \cite[Proposition 1]{Fa} which is valid for any $p$. This uniqueness in turn replaces the use of the ramification correspondence theorem (\cite[Theorem 1.1]{Ha_ramcorr}) to show that the canonical subgroup coincides with a ramification subgroup.

Suppose that the residue field $k$ of $K$ is perfect. Let $\cG$ be a unipotent truncated Barsotti-Tate group of level one, height $h$ and dimension $d$ over $\okey$ with $0<d<h$ and Hodge height $\Hdg(\cG)=w$. Let $\SGm$ be the object of the category $\ModSGfV$ corresponding to $\cG$ via the anti-equivalence $\cG(-)$. Consider the objects $\SGm_1$ and $\Fil^1\SGm_1$ of the category $\mathrm{Mod}_{/\tokey}^{1,\varphi}$ as in the previous section. Then, a verbatim argument as in the proof of \cite[Lemma 3.3]{Ha_cansub} shows the following proposition.

\begin{prop}\label{conjHodge}
Let the notation be as above. For any finite flat closed subgroup scheme $\cD$ of $\cG$ over $\okey$, let $\SGl$ be the subobject of $\SGm$ in the category $\ModSGfV$ corresponding to the quotient $\cG/\cD$. Suppose $w<p/(p+1)$. Then there exists a unique $\cD$ satisfying $\SGl/u^{e(1-w)}\SGl=\Fil^1\SGm_1/u^{e(1-w)}\Fil^1\SGm_1$. Moreover, for this unique $\cD$, we have the equality $v_R(\det(\varphi_\SGl))=w$.
\end{prop}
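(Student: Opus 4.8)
The plan is to mimic the proof of \cite[Lemma 3.3]{Ha_cansub} in our dual (unipotent) setting, using the classification of Theorem \ref{KiZ} to translate the assertion about subgroup schemes $\cD$ of $\cG$ into an assertion about subobjects of the Kisin module $\SGm$. Recall that finite flat closed subgroup schemes $\cD \subseteq \cG$ correspond (anti-equivalently) to quotients $\SGm \twoheadrightarrow \SGm/\SGl$ with $\SGl$ a $\varphi$-stable $\SG_1$-submodule of $\SGm$ which is saturated (so that both $\SGl$ and $\SGm/\SGl$ lie in $\ModSGfV$). Reducing modulo $u^e$ gives a $\varphi$-stable sub $\SGl/u^e\SGl$ of $\SGm_1$, and the condition to be imposed is that it agree with $\Fil^1\SGm_1$ after further reduction modulo $u^{e(1-w)}$. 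So the first step is to set up, over the Artinian base ring $\tokey = k[[u]]/(u^e)$ and its quotient $k[[u]]/(u^{e(1-w)})$, the linear algebra of $\varphi$-modules: one has the rank-$(h-d)$ free submodule $\Fil^1\SGm_1 = \Img(1\otimes\varphi)$ of $\SGm_1$, and the bound $v_p(\det(\varphi_{\Fil^1\SGm_1})) = w$ recalled just before the statement, which says precisely that $\varphi$ restricted to $\Fil^1\SGm_1$ has cokernel of length $ew$.

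The heart of the argument is a successive-approximation / lifting construction. First I would work modulo $u^{e(1-w)}$: there the image $\Fil^1\SGm_1$ already \emph{is} a candidate sub-$\varphi$-module, essentially because $\varphi(\SGm_1)$ generates $\Fil^1\SGm_1$ and, after killing $u^{e(1-w)}$, the structure map $1\otimes\varphi$ sees only the ``generic'' part; one checks $\varphi$ carries $\Fil^1\SGm_1$ into itself modulo $u^{e(1-w)}$ using $v_p(\det\varphi_{\Fil^1\SGm_1}) = w$ together with $w < p/(p+1)$ (the latter is what makes $p(1-w) > e^{-1}\cdot e(1-w)$-type estimates work, i.e.\ guarantees $\varphi$ of something divisible by $u^{e(1-w)/p}$ stays in the right submodule). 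Then one lifts this submodule step by step from $k[[u]]/(u^{e(1-w)})$ up to $k[[u]]/(u^e) = \tokey$ and, crucially, up to all of $\SGm$ over $\SG_1 = k[[u]]$: at each stage the obstruction to $\varphi$-stability of a chosen lift lives in a quotient killed by a controlled power of $u$, and by $w < p/(p+1)$ one can adjust the lift to kill it, the adjustments converging $u$-adically. Uniqueness is built into this process: any two sub-$\varphi$-modules agreeing mod $u^{e(1-w)}$ must, by the same estimate, agree mod $u^{e(1-w)+\delta}$ for some fixed $\delta > 0$, hence by iteration agree exactly; one then has to check that the saturation of the resulting submodule does not enlarge it (again using the degree bound), so that it genuinely comes from a subgroup scheme $\cD$. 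Finally, for the last claim $v_R(\det(\varphi_\SGl)) = w$: since $\SGl$ reduces mod $u^{e(1-w)}$ to $\Fil^1\SGm_1$, which has $v_R(\det\varphi) = w < 1-w$, and $\det$ is insensitive to the part of $\SGl$ in degrees $\geq e(1-w)$ in the relevant sense, we get $v_R(\det(\varphi_\SGl)) = v_R(\det(\varphi_{\Fil^1\SGm_1})) = w$ by Lemma \ref{HodgeZ} (\ref{HodgeZ-deg}) applied to $\cG(\SGl) = \cG/\cD$ compared with $\Fil^1\SGm_1$.

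The main obstacle, I expect, is the \emph{existence and $\varphi$-stability of the lift} — namely verifying that the submodule $\Fil^1\SGm_1 \bmod u^{e(1-w)}$ really does lift to a $\varphi$-stable, \emph{saturated} $\SG_1$-submodule of $\SGm$, with all the numerical inequalities ($w < p/(p+1)$) used at exactly the right places; this is where the case $p = 2$ must be handled with care, since the slack in the estimate $p(1-w) - (1-w) = (p-1)(1-w)$ is smallest when $p = 2$. Everything else (the translation via Theorem \ref{KiZ}, the determinant computation via Lemma \ref{HodgeZ}) is formal once this step is in place. Since the excerpt explicitly says ``a verbatim argument as in the proof of \cite[Lemma 3.3]{Ha_cansub}'' suffices, the real content is checking that the inequalities in that argument were only ever $w < p/(p+1)$ and never secretly required $p \geq 3$.
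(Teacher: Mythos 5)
Your proposal follows the same route as the paper, which itself only invokes ``a verbatim argument as in the proof of \cite[Lemma 3.3]{Ha_cansub}'': translate subgroup schemes into saturated $\varphi$-stable subobjects of $\SGm$ via Theorem \ref{KiZ}, lift the conjugate Hodge filtration $\Fil^1\SGm_1$ to a $\varphi$-stable direct summand by successive approximation with convergence and uniqueness governed by the inequality $w<p/(p+1)$ (equivalently $ew<pe(1-w)$), and read off $v_R(\det(\varphi_\SGl))=w$ from the congruence with $\Fil^1\SGm_1$. This is essentially the paper's argument, so the proposal is correct in approach.
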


We temporarily refer to the unique $\cD$ in Proposition \ref{conjHodge} as the conjugate Hodge subgroup of $\cG$, which will be shown to be equal to the canonical subgroup of $\cG$. 

\begin{lem}\label{cHFrobKer}
Let $\cG$ be as above and $\cD$ be the conjugate Hodge subgroup of $\cG$. Then $\cD$ is the unique finite flat closed subgroup scheme of $\cG$ over $\okey$ such that $(\cG/\cD)^\vee\times \sS_{1-w}$ coincides with the Frobenius kernel of $\cG^\vee \times \sS_{1-w}$. In particular, the construction of the conjugate Hodge subgroup is compatible with any finite extension of $K$.
\end{lem}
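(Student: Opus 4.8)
The plan is to follow the argument of \cite{Ha_cansub}, using the classification of Section \ref{ReviewZK} (which is available for $p=2$) together with the passage to the dual situation above; the only genuinely new ingredient, the compatibility with finite base change, will be a formal consequence of the uniqueness. First I would reformulate the asserted property without passing to the dual. Both $\cG\times\sS_{1-w}$ and $\cG^\vee\times\sS_{1-w}$ are truncated Barsotti-Tate groups of level one over $\cO_{K,1-w}$, which has characteristic $p$ since $v_p(p)=1>1-w$; hence $\Img(V)=\Ker(F)$ for both of them. Taking orthogonal complements with respect to the Cartier pairing between $\cG\times\sS_{1-w}$ and $\cG^\vee\times\sS_{1-w}$, and using that the Frobenius and the Verschiebung of Cartier dual group schemes are exchanged under duality, one checks that $(\cG/\cD)^\vee\times\sS_{1-w}$ equals $\Ker(F_{\cG^\vee\times\sS_{1-w}})$ if and only if $\cD\times\sS_{1-w}$ equals $\Ker(F_{\cG\times\sS_{1-w}})$. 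So it suffices to show that $\cD$ is the unique finite flat closed subgroup scheme of $\cG$ over $\okey$ with $\cD\times\sS_{1-w}=\Ker(F_{\cG\times\sS_{1-w}})$.

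For existence — that the conjugate Hodge subgroup has this property — I would use that, on Breuil--Kisin modules, the relative Frobenius of $\cG\times\sS_1$ corresponds to the linearized Frobenius $1\otimes\varphi\colon\varphi^{*}\SGm_1\to\SGm_1$, so that the conjugate Hodge filtration $\Fil^1\SGm_1=\Img(1\otimes\varphi)$ corresponds, in the classification of Section \ref{ReviewZK}, to the Frobenius quotient $(\cG\times\sS_1)/\Ker(F)$ of $\cG\times\sS_1$. Reducing modulo $u^{e(1-w)}$ and invoking the compatibility, established in \cite{Ha_ramcorr}, of the reduction of a finite flat group scheme over $\okey$ modulo $\cO_{K,1-w}$ with the reduction of its Breuil--Kisin module modulo $u^{e(1-w)}$, one finds that $(\cG\times\sS_{1-w})/\Ker(F)$ has Breuil--Kisin module $\Fil^1\SGm_1/u^{e(1-w)}\Fil^1\SGm_1$. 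On the other hand, by Proposition \ref{conjHodge} the Breuil--Kisin module $\SGl$ of $\cG/\cD$ satisfies $\SGl/u^{e(1-w)}\SGl=\Fil^1\SGm_1/u^{e(1-w)}\Fil^1\SGm_1$. Hence $(\cG/\cD)\times\sS_{1-w}$ and $(\cG\times\sS_{1-w})/\Ker(F)$ have the same Breuil--Kisin module and therefore coincide, i.e. $\cD\times\sS_{1-w}=\Ker(F_{\cG\times\sS_{1-w}})$.

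For uniqueness, if $\cD'$ is another finite flat closed subgroup scheme of $\cG$ over $\okey$ with the property of the lemma, then $\cD'\times\sS_{1-w}=\cD\times\sS_{1-w}$ by the reformulation, and \cite[Proposition 1]{Fa}, which is valid for every $p$, forces $\cD'=\cD$. Finally, the compatibility with a finite extension $L/K$ is formal: the residue field of $L$ is again perfect, $\cG\times_\okey\oel$ is a unipotent truncated Barsotti-Tate group of level one of the same Hodge height $w<p/(p+1)$, so its conjugate Hodge subgroup is defined; and since the formation of Frobenius kernels, Cartier duals and quotients commutes with the base change $\cO_{K,1-w}\to\cO_{L,1-w}$, the subgroup scheme $\cD\times_\okey\oel$ has the characterizing property over $\oel$, and so equals the conjugate Hodge subgroup of $\cG\times_\okey\oel$ by the uniqueness just proved.

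The delicate point is the identification of $\Fil^1\SGm_1$ with the Frobenius quotient $(\cG\times\sS_1)/\Ker(F)$ together with the fact that it survives reduction modulo $u^{e(1-w)}$; this is carried out exactly as the corresponding step of \cite{Ha_cansub}, all of whose ingredients are, by Section \ref{ReviewZK}, now available for $p=2$.
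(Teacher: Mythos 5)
Your reformulation via Cartier duality (reducing to $\cD\times\sS_{1-w}=\Ker(F_{\cG\times\sS_{1-w}})$) is fine and is implicitly used in the paper as well, and your deduction of base-change compatibility from uniqueness matches the paper. However, both of your central steps have genuine gaps. For existence, you identify $(\cG\times\sS_{1-w})/\Ker(F)$ with the group scheme ``corresponding to'' $\Fil^1\SGm_1/u^{e(1-w)}\Fil^1\SGm_1$ and then conclude that two group schemes over $\sS_{1-w}$ with the same reduced Breuil--Kisin module coincide. No such classification or full-faithfulness statement over the Artinian ring $\cO_{K,1-w}$ is available here: the compatibility you invoke from \cite{Ha_ramcorr} (the congruence between $\cG(\SGm)$ and $\cH(\SGm)$, and the ramification correspondence built on it) is established only for $p\geq 3$, and avoiding exactly this ingredient is the raison d'\^etre of the present paper. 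The paper instead reduces the containment $(\cG/\cD)^\vee\times\sS_{1-w}\subseteq\Ker(F_{\cG^\vee\times\sS_{1-w}})$, via \cite[Proposition 1]{Fa}, to the vanishing of the map $\omega_{\cG/\cD}\otimes\cO_{K,1-w}\to\omega_{\cG}\otimes\cO_{K,1-w}$, which \emph{is} computable from the Breuil module by Lemma \ref{HodgeZ}: one writes $\varphi(\delta_1,\ldots,\delta_{h-d})=(\delta_1,\ldots,\delta_{h-d})D$ on a basis of $\SGl$ and uses $v_R(\det D)=w$ to see that the entries of $u^eD^{-1}$ are divisible by $u^{e(1-w)}$.

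For uniqueness, your claim that $\cD'\times\sS_{1-w}=\cD\times\sS_{1-w}$ together with \cite[Proposition 1]{Fa} ``forces $\cD'=\cD$'' is a non sequitur: that proposition characterizes containment in a Frobenius kernel by vanishing on invariant differentials over a characteristic-$p$ base; it says nothing about two flat subgroup schemes over $\okey$ being equal once their reductions agree, and such a rigidity statement is false without further input. The paper's actual argument is to evaluate the Dieudonn\'e crystal of the exact sequence $0\to\bD^{*}(\Img F)\to\bD^{*}(\cG\times\sS_i)\to\bD^{*}(\Ker F)\to 0$ on the divided power thickening $R_{pi}\to\cO_{\Kbar,i}$, deduce (using that $\Fil^1\SGm_1$ is a direct summand and that $\varphi:\cO_{K,i}\to R_{pi}$ is injective on free modules) the equality $\SGl'/u^{ei}\SGl'=\Fil^1\SGm_1/u^{ei}\Fil^1\SGm_1$, and only then invoke the uniqueness clause of Proposition \ref{conjHodge} to conclude $\cD=\cD'$. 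You would need to supply this crystalline evaluation step (or an equivalent) to close the gap.
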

\begin{proof}
Put $i=1-w$. First let us show that the group scheme $(\cG/\cD)^\vee\times \sS_{i}$ coincides with the Frobenius kernel of $\cG^\vee\times \sS_{i}$. By comparing orders, it is enough to show that the group scheme $(\cG/\cD)^\vee\times \sS_{i}$ is contained in the Frobenius kernel of $\cG^\vee\times \sS_{i}$. By \cite[Proposition 1]{Fa}, it is equivalent to saying that the natural map 
\[
\omega_{\cG/\cD}\otimes \cO_{K,i} \to \omega_{\cG}\otimes \cO_{K,i}
\] 
is zero. By Lemma \ref{HodgeZ} (\ref{HodgeZ-p}), this map can be identified with the top horizontal arrow of the commutative diagram
\[
\xymatrix{
(\Fil^1\cM_\SG(\SGl)/(\Fil^1 S)\cM_\SG(\SGl)) \otimes \cO_{K,i} \ar[r]\ar[d] & (\Fil^1\cM_\SG(\SGm)/(\Fil^1 S)\cM_\SG(\SGl)) \otimes \cO_{K,i}\ar@{^{(}->}[d] \\
(\cM_\SG(\SGl)/(\Fil^1 S)\cM_\SG(\SGl)) \otimes \cO_{K,i} \ar@{^{(}->}[r] & (\cM_\SG(\SGm)/(\Fil^1 S)\cM_\SG(\SGm)) \otimes \cO_{K,i},
}
\]
where the bottom horizontal arrow and the right vertical arrow are injective. Let $\delta_1,\ldots, \delta_{h-d}$ be a basis of the $\SG$-module $\SGl$ and $D$ be the element of $M_{h-d}(k[[u]])$ satisfying
\[
\varphi(\delta_1,\ldots,\delta_{h-d})=(\delta_1,\ldots,\delta_{h-d})D.
\]
From the definition of the functor $\cM_\SG(-)$, we see that the module on the top left corner of the diagram is equal to the $\cO_{K,i}$-module
\[
\Span_{\tokey}((1\otimes \delta_1,\ldots,1\otimes \delta_{h-d})u^e D^{-1})\otimes \cO_{K,i}.
\]
The equality $v_R(\det(D))=w$ implies that the entries of the matrix $u^eD^{-1}$ are divisible by $u^{ei}$ and the left vertical arrow of the diagram is zero. Hence the assertion follows.

On the other hand, let $\cD'$ be a finite flat closed subgroup scheme of $\cG$ over $\okey$ such that $(\cG/\cD')^\vee\times \sS_{i}$ coincides with the Frobenius kernel of $\cG^\vee\times \sS_{i}$. Let $\SGl'$ be the subobject of $\SGm$ corresponding to the quotient $\cG/\cD'$. Since $\cG$ is a truncated Barsotti-Tate group of level one, the group scheme $\cD'\times\sS_{i}$ also coincides with the Frobenius kernel of $\cG\times \sS_{i}$. Note that the principal ideal $m_R^{\geq i}$ of the ring $R_{pi}$ has a unique divided power structure satisfying $\gamma_n(x)=0$ for any $x\in m_R^{\geq i}$ and $n\geq p$. We can consider the surjection $\prjt_0: R_{pi}\to \cO_{\Kbar, i}$ as a divided power thickening over the surjection $S\to \okey$. Evaluating the exact sequence
\[
0 \to \bD^{*}(\Img (F))\to \bD^{*}(\cG\times \sS_i)\to \bD^{*}(\Ker (F))\to 0
\]
on this divided power thickening, we obtain the equality 
\[
R_{pi}\otimes_S \cM_\SG(\SGl')=\Img(R_{pi}\otimes_{\varphi,S}\cM_\SG(\SGm) \overset{1\otimes\varphi}\to R_{pi}\otimes_S \cM_\SG(\SGm)).
\]
Since the $\tokey$-module $\Fil^1\SGm_1$ is a direct summand of $\SGm_1$, the $R_{pi}$-module $R_{pi}\otimes_{\varphi,\tokey}\Fil^1\SGm_1$ is a submodule of $R_{pi}\otimes_{\varphi,\tokey}\SGm_1$ and is equal to the image on the right-hand side of the above equality. Thus we obtain the equality
\[
R_{pi}\otimes_{\varphi, \cO_{K,i}}(\SGl'/u^{ei}\SGl') =  R_{pi}\otimes_{\varphi,\cO_{K,i}}(\Fil^1\SGm_1/u^{ei}\Fil^1\SGm_1)
\]
and the natural map 
\[
\SGl'/u^{ei}\SGl' \to (\SGm_1/u^{ei}\SGm_1)/(\Fil^1\SGm_1/u^{ei}\Fil^1\SGm_1)
\]
is zero after tensoring the injection $\varphi:\cO_{K,i}\to R_{pi}$. Since the $\cO_{K,i}$-modules of the both sides of this natural map is free, we obtain the inclusion $\SGl'/u^{ei}\SGl'\subseteq \Fil^1\SGm_1/u^{ei}\Fil^1\SGm_1$. Since the $\cO_{K,i}$-module $\SGl'/u^{ei}\SGl'$ is also a direct summand of $\SGm_1/u^{ei}\SGm_1$, the reverse inclusion follows similarly and the equality 
\[
\SGl'/u^{ei}\SGl'=\Fil^1\SGm_1/u^{ei}\Fil^1\SGm_1
\]
holds. Then the uniqueness assertion in Proposition \ref{conjHodge} implies the equality $\cD=\cD'$.

For the last assertion, let $L/K$ be a finite extension. Put $\cG_{\oel}=\cG\times \Spec(\oel)$ and similarly for $\cD_{\oel}$. The subgroup scheme $(\cG_{\oel}/\cD_{\oel})^\vee\times \sS_{1-w}$ also coincides with the Frobenius kernel of $(\cG_{\oel})^\vee\times \sS_{1-w}$. By the uniqueness we have just proved, the subgroup scheme $\cD_{\oel}$ coincides with the conjugate Hodge subgroup of $\cG_{\oel}$. This concludes the proof of the lemma.
\end{proof}

\begin{lem}\label{cHprop}
Let $\cG$ be as above and $\cD$ be the conjugate Hodge subgroup of $\cG$. 
\begin{enumerate}
\item\label{cH-ram1} $\cD=\cG_{(1-w)/(p-1)}$.
\item\label{cH-HT} If $w<(p-1)/p$, then the subgroup $\cD(\okbar)$ coincides with the kernel of the Hodge-Tate map
\[
\HT_b: \cG(\okbar)\to \omega_{\cG^\vee}\otimes \cO_{\Kbar,b}
\]
for any $b$ satisfying $w/(p-1) <b \leq 1-w$.
\item\label{cH-ram2} If $w<1/2$, then we have $\cD=\cG_b$ for any $b$ satisfying $w/(p-1)< b \leq (1-w)/(p-1)$. 
\end{enumerate}
\end{lem}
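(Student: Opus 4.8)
The plan is to deduce all three assertions from their equal-characteristic counterparts for the group scheme $\cH(\SGm)$ over $k[[u]]$, following the argument of \cite{Ha_cansub} but replacing the ramification correspondence theorem \cite[Theorem 1.1]{Ha_ramcorr} (available only for $p\geq 3$) by the base-change compatibility of Lemma \ref{cHFrobKer}. First I would reduce to the case $\cG(\okbar)=\cG(\okey)$: for a finite extension $L/K$ the lower ramification subgroups $\cG_i(\okbar)$ and the Hodge--Tate maps $\HT_i$ depend only on $\cG(\okbar)$ and on the valuation $v_p$ of $\okbar$, hence are unchanged under $\okey\to\oel$, while the conjugate Hodge subgroup $\cD$ is compatible with this base change by the last assertion of Lemma \ref{cHFrobKer}; since $\cG$ is finite flat over $\okey$, after enlarging $K$ we may assume every geometric point of $\cG$ is $\okey$-rational.

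Under this hypothesis, the next step is to show that the natural isomorphism $\varepsilon_\SGm\colon\cG(\okbar)\to\cH(\SGm)(R)$ of Theorem \ref{KiZ} (\ref{KiZ-2}) carries the lower ramification filtration of $\cG$ onto that of $\cH(\SGm)$, i.e.\ $\varepsilon_\SGm(\cG_i(\okbar))=\cH(\SGm)_i(R)$ for all positive rational $i$. This is where the rationality hypothesis enters: $\varepsilon_\SGm$ is a priori only $G_{K_\infty}$-equivariant, but once all points are rational the comparison becomes one of genuine reductions --- membership of a point of $\cG(\okey)$ in $\cG_i(\okbar)$ means that its coordinates lie in $m_{\Kbar}^{\geq i}$, and via $\cH(\SGm)(R)=\Hom_{k[[u]],\varphi}(\SGm,R)$, the identification $R_i\simeq\okbar/m_{\Kbar}^{\geq i}$ for $i\leq 1$, and $v_R(u)=1/e$, this translates exactly into the condition defining $\cH(\SGm)_i(R)$. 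I expect this to be the short, elementary point referred to in the introduction. Granting it, assertion (1) becomes the equal-characteristic statement $\cH(\SGm/\SGl)(R)=\cH(\SGm)_{(1-w)/(p-1)}(R)$, where $\SGl\subseteq\SGm$ is the subobject attached to $\cG/\cD$. This computation is valid for all $p$: writing the Frobenius of $\SGm$ in a basis adapted to $\SGl\subseteq\SGm$ as in the proof of Lemma \ref{cHFrobKer}, and feeding in $v_R(\det\varphi_\SGl)=w$ from Proposition \ref{conjHodge} together with the $\tokey$-splitting $\SGm_1=\Fil^1\SGm_1\oplus(\SGm_1/\Fil^1\SGm_1)$ with $v_p(\det\varphi_{\Fil^1\SGm_1})=w$, one sees from the defining equations $x_j^p=\sum_i D_{ij}x_i$ of $\cH(\SGm)$ that the nonzero $R$-points lying in $\cH(\SGm/\SGl)(R)$ are precisely those whose coordinates all have $v_R$-valuation $\geq(1-w)/(p-1)$, whereas the remaining nonzero points have some coordinate of valuation $<(1-w)/(p-1)$, in fact $\leq w/(p-1)$ when $w<1/2$. (Alternatively, one may cite directly the equal-characteristic part of \cite{Ha_cansub} or of \cite{Ha_ramcorr}, which is insensitive to $p$.) This gives $\cD=\cG_{(1-w)/(p-1)}$.

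For assertion (2), Lemma \ref{HTmapZ} writes $\HT_b$ as the composite $\cG(\okbar)\xrightarrow{\varepsilon_\SGm}\cH(\SGm)(R)\to\Hom_\tokey(\Fil^1\SGm_1,R/m_R^{\geq b})\to\omega_{\cG^\vee}\otimes\cO_{\Kbar,b}$ with injective last arrow, so $\Ker\HT_b$ is the set of points whose restriction to a lift of $\Fil^1\SGm_1$ vanishes modulo $m_R^{\geq b}$. Using the divisibility of $u^eD^{-1}$ by $u^{e(1-w)}$ established in the proof of Lemma \ref{cHFrobKer}, one checks that every point of $\cD(\okbar)$ satisfies this for $b\leq 1-w$; conversely, by the valuation bound for points outside $\cD$ from the previous step and the hypothesis $w<(p-1)/p$, such a point fails the vanishing already on the $\Fil^1$-part as soon as $b>w/(p-1)$. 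Hence $\Ker\HT_b=\cD(\okbar)$ for $w/(p-1)<b\leq 1-w$. For assertion (3), the stronger bound $w<1/2$ gives $w/(p-1)<(1-w)/(p-1)$, so the two inequalities extracted above --- every point of $\cD$ has $v_R\geq(1-w)/(p-1)$ and every nonzero point outside $\cD$ has $v_R\leq w/(p-1)$ --- show that the open interval $(w/(p-1),(1-w)/(p-1))$ contains no ramification jump, whence $\cG_b=\cD$ for every $b$ with $w/(p-1)<b\leq(1-w)/(p-1)$.

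The main obstacle is the second step: establishing carefully, for $p=2$, that once $\cG(\okbar)=\cG(\okey)$ the lower ramification of $\cG$ is genuinely computed on the equal-characteristic side, and then running the ramification estimate for $\cH(\SGm/\SGl)$ sharply enough to put the jump at exactly $(1-w)/(p-1)$ and to control the valuations of the points outside $\cD$. The delicate point in the estimate is the interaction between the unit-determinant block $\SGm_1/\Fil^1\SGm_1$ and the $\Fil^1$-block, which is what forces the relevant bounds to be expressed in terms of $1-w$ rather than, say, $\deg(\cD)$. The Hodge--Tate statement (2) and the interval argument (3) should then follow formally.
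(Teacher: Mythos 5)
Your overall architecture agrees with the paper's: reduce to the case $\cG(\okbar)=\cG(\okey)$ using the base-change compatibility of Lemma \ref{cHFrobKer}, then transfer the question to the equal-characteristic group scheme $\cH(\SGm)$. The gap is in your second step. You justify the identity $\varepsilon_\SGm(\cG_i(\okbar))=\cH(\SGm)_i(R)$ by saying that, once all points are rational, membership in $\cG_i(\okbar)$ means the coordinates lie in $m_{\Kbar}^{\geq i}$ and that this ``translates exactly'' into the defining condition of $\cH(\SGm)_i(R)$. There is no such direct translation: $\varepsilon_\SGm$ is built from the crystalline comparison ($f\mapsto 1\otimes(\varphi\circ f)$ and evaluation of the Dieudonn\'e crystal on $\Acrys$), not from any identification of the affine algebra of $\cG(\SGm)$ over $\okey$ with that of $\cH(\SGm)$ over $k[[u]]$. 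A valuation-by-valuation matching of coordinates is precisely the content of the ramification correspondence theorem \cite[Theorem 1.1]{Ha_ramcorr}, which rests on the congruence of defining equations \cite[Corollary 4.6]{Ha_ramcorr} and is available only for $p\geq 3$ --- i.e.\ it is exactly what this lemma is designed to avoid. So as written, your key step assumes the unavailable input.

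The conclusion of your step two is nevertheless correct for rational points, and the paper's way of proving it is the missing idea: for $x\neq 0$ rational, the scheme-theoretic closure $\cG'$ of $\bF_p x$ in $\cG$ is a finite flat group scheme of order $p$ over $\okey$, corresponding to a rank-one quotient $\SGm'$ of $\SGm$. Lower ramification subgroups of an order-$p$ group are either trivial or everything, and by the Oort--Tate classification the jump occurs at $\deg/(p-1)$; hence $x\in\cG_{(1-w)/(p-1)}(\okbar)$ if and only if $\deg(\cG(\SGm'))\geq 1-w$, and likewise on the $k[[u]]$-side for $\cH(\SGm')$. The two conditions coincide because $\deg(\cG(\SGm'))=\deg(\cH(\SGm'))=v_R(\det\varphi_{\SGm'})$ by Lemma \ref{HodgeZ} (\ref{HodgeZ-deg}). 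No comparison of coordinates or of defining equations is needed, which is why the argument survives at $p=2$. Your treatment of assertion (2) via Lemma \ref{HTmapZ} and of assertion (3) via the absence of ramification jumps in the interval $(w/(p-1),(1-w)/(p-1)]$ is consistent with the paper (which invokes the corresponding parts of \cite{Ha_cansub} verbatim), but both depend on the valuation estimates for points inside and outside $\cD$, i.e.\ on first repairing the transfer step as above.
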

\begin{proof}
First we consider the assertion (\ref{cH-ram1}). It is enough to show the equality $\cD_{\oel}=(\cG_{\oel})_{(1-w)/(p-1)}$ for a finite extension $L/K$. By Lemma \ref{cHFrobKer}, the subgroup scheme $\cD_{\oel}$ is the conjugate Hodge subgroup of $\cG_{\oel}$. Thus we may assume $\cG(\okbar)=\cG(\okey)$. Let $\SGm$ and $\SGn$ be the objects of the category $\ModSGfV$ corresponding to $\cG$ and $\cD$, respectively. Then we can show the equality 
\[
\cH(\SGn)=\cH(\SGm)_{(1-w)/(p-1)}
\]
as in the proof of \cite[Theroem 3.1 (c)]{Ha_cansub}. Take $x\in \cG(\SGm)$. Let $\cG'$ be the scheme-theoretic closure in $\cG$ of the subgroup $\bF_px\subseteq \cG(\okbar)$ and $\SGm'$ be the quotient of $\SGm$ corresponding to $\cG'$. By the Oort-Tate classification (\cite{OT}), we have the following equivalences:
\begin{align*}
\cG(\SGm')_{(1-w)/(p-1)}=\cG(\SGm') &\Leftrightarrow v_R(\det(\varphi_{\SGm'}))\geq 1-w \\
&\Leftrightarrow \cH(\SGm')_{(1-w)/(p-1)}=\cH(\SGm').
\end{align*}

Note the commutative diagram
\[
\xymatrix{
\cG(\SGm')(\okbar) \ar[r]^{\varepsilon_{\SGm'}}_{\sim}\ar@{^{(}->}[d] & \cH(\SGm')(R) \ar@{^{(}->}[d]\\
\cG(\SGm)(\okbar) \ar[r]^{\varepsilon_{\SGm}}_{\sim} & \cH(\SGm)(R)\\
\cG(\SGn)(\okbar) \ar[r]^{\varepsilon_{\SGn}}_{\sim}\ar@{^{(}->}[u] & \cH(\SGn)(R),\ar@{^{(}->}[u]
}
\]
where the horizontal arrows are isomorphisms and the vertical arrows are injections. Then we have
\begin{align*}
x\in \cD(\okbar)=\cG(\SGn)(\okbar) & \Leftrightarrow \varepsilon_{\SGm}(x) \in \cH(\SGn)(R)=\cH(\SGm)_{(1-w)/(p-1)}(R)\\
& \Leftrightarrow \cH(\SGm')_{(1-w)/(p-1)}=\cH(\SGm')\\
& \Leftrightarrow \cG(\SGm')_{(1-w)/(p-1)}=\cG(\SGm')\\
& \Leftrightarrow x\in \cG(\SGm)_{(1-w)/(p-1)}(\okbar)
\end{align*}
and the assertion (\ref{cH-ram1}) follows. The assertions (\ref{cH-HT}) and (\ref{cH-ram2}) can be shown by a verbatim argument as in the proof of \cite[Theorem 3.1 (2), (3)]{Ha_cansub}. 
\end{proof}

\begin{rmk}
By using results of \cite{Kim_2} and Remark \ref{KimRmk}, we can drop the assumption that $\cG$ is unipotent from the arguments above, though we do no use this fact in what follows.
\end{rmk}

Now we proceed to prove the following theorem, which is the level one case of Theorem \ref{mainZ}.

\begin{thm}\label{cansubZ1}
Let the notation be as in Section \ref{intro}. Let $\cG$ be a truncated Barsotti-Tate group of level one, height $h$ and dimension $d$ over $\okey$ with $0<d<h$ and Hodge height $w=\Hdg(\cG)$. 
\begin{enumerate}
\item\label{cansubZ1-1} If $w<p/(p+1)$, then there exists a unique finite flat closed subgroup scheme $\cC$ of $\cG$ of order $p^d$ over $\okey$ such that $\cC\times \sS_{1-w}$ coincides with the kernel of the Frobenius homomorphism of $\cG\times \sS_{1-w}$. We refer to the subgroup scheme $\cC$ as the (level one) canonical subgroup of $\cG$. Moreover, the subgroup scheme $\cC$ has the following properties:
\begin{enumerate}
\item\label{cansubZ1-deg} $\deg(\cG/\cC)=w$.
\item\label{cansubZ1-iso} Let $\cC'$ be the canonical subgroup of $\cG^\vee$. Then we have the equality of subgroup schemes $\cC'=(\cG/\cC)^\vee$, or equivalently $\cC(\okbar)=\cC'(\okbar)^\bot$, where $\bot$ means the orthogonal subgroup with respect to the Cartier pairing.
\item\label{cansubZ1-ram1} $\cC=\cG_{(1-w)/(p-1)}=\cG^{pw/(p-1)+}$.
\end{enumerate}
\item\label{cansubZ1-HT} If $w<(p-1)/p$, then the subgroup $\cC(\okbar)$ coincides with the kernel of the Hodge-Tate map $\HT_b: \cG(\okbar)\to \omega_{\cG^\vee}\otimes \cO_{\Kbar, b}$ for any $b$ satisfying $w/(p-1) <b \leq 1-w$.
\item\label{cansubZ1-ram2} If $w<1/2$, then $\cC$ coincides both with the lower ramification subgroup scheme $\cG_b$ for any $b$ satisfying $w/(p-1)<b\leq (1-w)/(p-1)$ and the upper ramification subgroup scheme $\cG^{j+}$ for any $j$ satisfying $p w/(p-1)\leq j<p(1-w)/(p-1)$. 
\end{enumerate}
\end{thm}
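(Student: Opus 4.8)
The plan is to run the argument of \cite{Ha_cansub}, replacing everywhere the Breuil--Kisin module of $\cG$ by that of its maximal unipotent quotient, so that Kisin's classification (Theorem \ref{KiZ}) and the preparatory results of Section \ref{ReviewZK} apply. First I would reduce to the case that the residue field $k$ of $K$ is perfect: base change along $\okey\to\oknrh$ leaves the Hodge height unchanged, so one constructs $\cC$ over $\oknrh$ and descends it along $\Gal(\knrh/K)$, descent being legitimate because of the uniqueness asserted in part (1). Next I would reduce to the case that $\cG$ is unipotent. Let $\cG^m\subseteq\cG$ be the maximal multiplicative closed subgroup scheme, i.e. $\cG^m=((\cG^\vee)^{\et})^\vee$, and put $\cQ=\cG/\cG^m$; then $\cQ$ is unipotent (its Cartier dual is the connected part $(\cG^\vee)^0$), is again a truncated Barsotti--Tate group of level one (a check on geometric fibres together with flatness), and has Hodge height $w$, since $\Lie((\cG^\vee)^{\et}\times\sS_1)=0$ forces $\Lie(\cQ^\vee\times\sS_1)=\Lie(\cG^\vee\times\sS_1)$. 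Moreover $F$ kills $\cG^m$, so $\cG^m\subseteq\Ker(F)$, and $\cG^m\cap\Img(F)\subseteq\cG^m\cap\Ker(V)=0$ because $V$ is an isomorphism on $\cG^m$; hence $\Ker(F)/\cG^m=\Ker(F_\cQ)$ over $\sS_{1-w}$.

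For the construction: if $\dim\cQ=0$ take $\cC=\cG^m$; otherwise $0<\dim\cQ<\height\cQ$ and $w<p/(p+1)$, so Proposition \ref{conjHodge} produces the conjugate Hodge subgroup $\cC_\cQ$ of $\cQ$. By Lemma \ref{cHFrobKer}, together with the identity $\Ker(F_\cG)^\bot=\Img(V_{\cG^\vee})=\Ker(F_{\cG^\vee})$ valid for truncated Barsotti--Tate groups of level one, $\cC_\cQ\times\sS_{1-w}$ is the Frobenius kernel of $\cQ\times\sS_{1-w}$, and $\cC_\cQ$ is the unique finite flat subgroup with this property. I then define $\cC$ to be the preimage of $\cC_\cQ$ under $\cG\to\cQ$; it is finite flat of order $p^{\dim\cG^m}\cdot p^{\dim\cQ}=p^d$, and $\cC\times\sS_{1-w}=\Ker(F)$ because $\Ker(F)/\cG^m=\Ker(F_\cQ)$ over $\sS_{1-w}$. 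For uniqueness, a subgroup $\cC''$ with the stated property has connected special fibre, hence is connected, so its maximal multiplicative subgroup is contained in $\cG^m$ and coincides with $\cG^m$ after $\times\sS_{1-w}$; since two multiplicative finite flat subgroups with the same special fibre coincide (the quotient is multiplicative, with étale dual trivial on the special fibre), we get $\cG^m\subseteq\cC''$, so $\cC''/\cG^m$ is a finite flat subgroup of $\cQ$ of order $p^{\dim\cQ}$ equal to $\Ker(F_\cQ)$ over $\sS_{1-w}$, whence $\cC''/\cG^m=\cC_\cQ$ and $\cC''=\cC$.

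The properties follow essentially as in \cite{Ha_cansub}. Since $\cG/\cC=\cQ/\cC_\cQ$, assertion (\ref{cansubZ1-deg}) is $\deg(\cQ/\cC_\cQ)=v_R(\det(\varphi_\SGl))=w$, by Lemma \ref{HodgeZ}(\ref{HodgeZ-deg}) and the last part of Proposition \ref{conjHodge}, $\SGl$ being the Breuil--Kisin module of $\cG/\cC$. For (\ref{cansubZ1-iso}), applying part (1) to $\cG^\vee$ gives its canonical subgroup $\cC'$, and since $(\cG/\cC)^\vee=\cC(\okbar)^\bot$ has order $p^{h-d}$ and equals $\Ker(F_{\cG^\vee})$ over $\sS_{1-w}$, the uniqueness just proved forces $(\cG/\cC)^\vee=\cC'$. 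For (\ref{cansubZ1-HT}) and the ramification statements (\ref{cansubZ1-ram1}), (\ref{cansubZ1-ram2}), I would first use the compatibility of the construction with any finite extension of $K$ (Lemma \ref{cHFrobKer}) to reduce to $\cG(\okbar)=\cG(\okey)$, then descend to $\cQ$ and invoke Lemma \ref{cHprop}(\ref{cH-HT}), (\ref{cH-ram1}), (\ref{cH-ram2}) — where the lower ramification subgroups of $\cQ$ are read off from the equal-characteristic group scheme $\cH(\SGm)$ — and finally pass from the lower-numbering statement to the upper-numbering statement $\cC=\cG^{pw/(p-1)+}$ via the comparison between the two ramification filtrations.

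The main obstacle is the reduction to the unipotent case. One must verify that $\cG/\cG^m$ is again a truncated Barsotti--Tate group of level one of Hodge height $w$, that the canonical subgroup of $\cG$ is exactly the preimage of the canonical subgroup of $\cG/\cG^m$ (so the multiplicative part genuinely contributes nothing), and, for parts (\ref{cansubZ1-ram1}) and (\ref{cansubZ1-ram2}), that both the lower and the upper ramification subgroups are compatible with this quotient and with finite base extension. Once this bookkeeping is in place, the construction and all of its stated properties are formal consequences of Proposition \ref{conjHodge} and Lemmas \ref{cHFrobKer} and \ref{cHprop}, exactly as in \cite{Ha_cansub}.
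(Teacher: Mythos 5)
Your construction is correct and follows the same overall strategy as the paper, but via the dual decomposition: the paper splits off the \emph{\'etale} part by passing to the unit component, applies Proposition \ref{conjHodge} to the unipotent group $(\cG^0)^\vee$ (of height $h_0$ and dimension $h_0-d$) and sets $\cC=((\cG^0)^\vee/\cD)^\vee\subseteq\cG^0$, whereas you split off the \emph{multiplicative} part, apply Proposition \ref{conjHodge} to $\cQ=\cG/\cG^m=((\cG^\vee)^0)^\vee$ and take $\cC$ to be the preimage of $\cC_\cQ$. These are genuinely different auxiliary unipotent groups when $\cG$ has both a nontrivial \'etale quotient and a nontrivial multiplicative part, but by the uniqueness both produce the same $\cC$; in fact your description of $\cC$ is exactly the paper's description of $\cC'$ for $\cG^\vee$ (the preimage $\tilde{\cD}$ appearing in its proof of part (\ref{cansubZ1-HT})), so the two are Cartier dual to one another. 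Your route makes (\ref{cansubZ1-deg}) slightly more direct ($\cG/\cC=\cQ/\cC_\cQ$ immediately) and makes (\ref{cansubZ1-HT}) cleaner, since $\HT_b$ visibly factors through $\cQ(\okbar)$ with $\omega_{\cG^\vee}\to\omega_{\cQ^\vee}$ an isomorphism; the extra translation you need (Frobenius kernel of $\cQ$ versus Frobenius kernel of $\cQ^\vee$ of the quotient, via $\Ker F=\Img V$) is already implicit in the paper's proof of Lemma \ref{cHFrobKer}. One typographical caveat: $\Img(F)$ lives in $\cG^{(p)}$, so the intersection $\cG^m\cap\Img(F)$ should be replaced by an order count for $\Ker(F)/\cG^m\to\Ker(F_\cQ)$, which works since both have order $p^{d-m}$.

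The one place where your proposal is thinner than it should be is parts (\ref{cansubZ1-ram1}) and (\ref{cansubZ1-ram2}), and this is more than bookkeeping. Lemma \ref{cHprop} gives you $\cC_\cQ=\cQ_{(1-w)/(p-1)}$, but lower ramification subgroups do not formally commute with the quotient $\cG\to\cQ$ (the map $\cG_b(\okbar)\to\cQ_b(\okbar)$ need not be surjective a priori), so ``$\cC=\cG_b$'' does not follow by descent to $\cQ$ alone; likewise ``the comparison between the two ramification filtrations'' you invoke is not an internal upper/lower comparison for $\cG$ but the duality theorem of Tian and Fargues relating $\cG^{j+}$ to lower ramification subgroups of $\cG^\vee$. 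The paper's mechanism is: lower ramification of the unipotent group from Lemma \ref{cHprop}, then Tian--Fargues to convert to upper ramification of the dual, then the auxiliary Lemma \ref{connupram} ($\cH^j=(\cH^0)^j$ for $j>0$) to pass from the unit component to $\cG$ itself, and finally duality again for the lower-ramification statement. Your setup needs the same two ingredients (or the analogous statement that $\cG_b$ is the preimage of $\cQ_b$, which is most easily proved by exactly this dual detour), so you should make them explicit rather than subsume them under ``compatibility with the quotient.''
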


\begin{proof}
By a base change argument as in the proof of \cite[Theorem 3.1]{Ha_cansub}, we may assume that the residue field $k$ of $K$ is perfect. Let $\cG^0$ ({\it resp.} $\cG^{\et}$) be the unit component ({\it resp.} the maximal etale quotient) of the group scheme $\cG$, and consider their Cartier duals $(\cG^0)^\vee$ and $(\cG^\et)^\vee$. These four group schemes are all truncated Barsotti-Tate groups of level one over $\okey$ and let $h_0$ be the height of $\cG^0$. Then $(\cG^0)^\vee$ is a unipotent truncated Barsotti-Tate group of level one, height $h_0$ and dimension $h_0-d$. If $h_0=d$, then the group scheme $\cG$ is ordinary (namely, $(\cG^0)^\vee$ is etale) and the assertions are clear. Thus we may assume $h_0>d$. 

Since $\Hdg(\cG)=\Hdg(\cG^\vee)$ and $\Lie(\cG)=\Lie(\cG^0)$, the truncated Barsotti-Tate group $(\cG^0)^\vee$ satisfies the assumption on the Hodge height in Proposition \ref{conjHodge}. Let $\cD$ be the conjugate Hodge subgroup of $(\cG^0)^\vee$, which is of order $p^{h_0-d}$. We define the canonical subgroup $\cC$ by $\cC=((\cG^0)^\vee/\cD)^\vee$, which is a finite flat closed subgroup scheme of $\cG^0$ of order $p^d$ over $\okey$. By Lemma \ref{cHFrobKer}, the group scheme $\cC\times \sS_{1-w}$ coincides with the Frobenius kernel of $\cG^0\times\sS_{1-w}$, which is equal to the Frobenius kernel of $\cG\times\sS_{1-w}$. If a finite flat closed subgroup scheme $\cE$ of $\cG$ has the reduction $\cE\times\sS_{1-w}$ equal to this Frobenius kernel, then $\cE$ is connected and Lemma \ref{cHFrobKer} implies $\cC=\cE$. Thus the uniqueness assertion of Theorem \ref{cansubZ1} (\ref{cansubZ1-1}) follows. 

By Lemma \ref{HodgeZ} (\ref{HodgeZ-deg}) and Proposition \ref{conjHodge}, we have $\deg((\cG^0)^\vee/\cD)=w$ and 
\begin{align*}
\deg(\cG/\cC)&=\deg(\cG^0/\cC)=\deg(\cD^\vee)=h_0-d-\deg(\cD)\\
&=h_0-d-\deg((\cG^0)^\vee)+\deg((\cG^0)^\vee/\cD)=w.
\end{align*}
Thus the part (\ref{cansubZ1-deg}) of the theorem follows. Cartier duality and the uniqueness of the canonical subgroup we have just proved imply the part (\ref{cansubZ1-iso}).

For the part (\ref{cansubZ1-ram1}), we insert here the following lemma due to the lack of references.

\begin{lem}\label{connupram}
Let $\cH$ be a finite flat group scheme over $\okey$ and $\cH^0$ be its unit component. Then we have $\cH^{j}=(\cH^0)^{j}$ for any positive rational number $j$.
\end{lem}
\begin{proof}
Replacing $K$ by a finite extension, we may assume that the maximal etale quotient $\cH^\et$ is a constant group scheme $\underline{M}$ for some abelian group $M$ and that we have an isomorphism of schemes over $\okey$
\[
\cH^0\times \underline{M}\to \cH
\]
which induces the natural isomorphism of group schemes $\cH^0\times\{0\}\to \cH^0$. Let $\cF^j$ be the functor of the set of geometric connected components of the $j$-th tubular neighborhood as in \cite[Section 2]{AM}. By \cite[Lemme 2.1.1]{AM}, the functor $\cF^j$ is compatible with products and we have a commutative diagram
\[
\xymatrix{
\cH^0(\okbar)\times \underline{M}(\okbar) \ar[r]\ar[d] & \cH(\okbar)\ar[d]\\
\cF^j(\cH^0)\times \cF^j(\underline{M}) \ar[r] &\cF^j(\cH),
}
\]
where the vertical arrows are homomorphisms and the horizontal arrows are bijections preserving zero elements. For $j>0$, the natural map $\underline{M}(\okbar)\to \cF^j(\underline{M})$ is an isomorphism and the kernel of the left vertical arrow is the subgroup $(\cH^0)^j(\okbar)\times\{0\}$. Thus the kernel of the right vertical arrow is the subgroup $(\cH^0)^j(\okbar)$ and the lemma follows.
\end{proof}

Now Lemma \ref{cHprop} (\ref{cH-ram1}) implies the equality $\cD=((\cG^0)^\vee)_b$ for $b=(1-w)/(p-1)$. 
By Lemma \ref{connupram} and a theorem of Tian and Fargues (\cite[Theorem 1.6]{Ti} or \cite[Proposition 6]{Fa}), we have the equalities $\cC=(\cG^0)^{j+}=\cG^{j+}$ for $j=p/(p-1)-p b=pw/(p-1)$. From this and the part (\ref{cansubZ1-iso}), we also obtain the equality $\cC=\cG_b$ and the part (\ref{cansubZ1-ram1}) follows. The part (\ref{cansubZ1-ram2}) can be shown similarly, by using Lemma \ref{cHprop} (\ref{cH-ram2}).

Finally we show the assertion (\ref{cansubZ1-HT}). By replacing $\cG$ by $\cG^\vee$, it is enough to show the assertion for the canonical subgroup $\cC'$ of $\cG^\vee$. Put $\cT=(\cG^\et)^\vee$. Let $\cD$ be the conjugate Hodge subgroup of $(\cG^0)^\vee$ as above and $\tilde{\cD}$ be the inverse image of $\cD$ by the natural epimorphism $\iota^\vee: \cG^\vee \to (\cG^0)^\vee$. We claim the equality $\cC'=\tilde{\cD}$. Indeed, by Lemma \ref{cHprop} (\ref{cH-ram1}), we have $\cD=((\cG^0)^\vee)_{(1-w)/(p-1)}$. By the part (\ref{cansubZ1-ram1}) of the theorem, this implies that $\cD$ is the canonical subgroup of $(\cG^0)^\vee$ and also coincides with $((\cG^0)^\vee)^{pw/(p-1)+}$. Since $\cC'=(\cG^\vee)^{pw/(p-1)+}$, the natural map $\iota^\vee$ induces the surjection $\cC'(\okbar)\to \cD(\okbar)$. In particular, the subgroup $\cC'(\okbar)$ is contained in $\tilde{\cD}(\okbar)$. On the other hand, since the group scheme $\cT$ is isomorphic to a direct sum of $\mu_p$ after a finite extension, we have the inclusions
\[
\cT=\cT_{1/(p-1)}\subseteq (\cG^\vee)_{1/(p-1)} \subseteq (\cG^\vee)_{(1-w)/(p-1)}=\cC',
\]
from which the claim follows.

Therefore, we have the commutative diagram with exact rows
\[
\xymatrix{
0 \ar[r] & \cT(\okbar) \ar[r]\ar@{=}[d] & \cC'(\okbar) \ar[r]\ar[d] & \cD(\okbar)\ar[r]\ar[d] & 0 \\
0 \ar[r] & \cT(\okbar) \ar[r] & \cG^\vee(\okbar) \ar[r]\ar[d] & (\cG^0)^\vee(\okbar)\ar[r]\ar[d] & 0 \\
 & & \omega_{\cG}\otimes \mathcal{O}_{\Kbar,b} \ar[r]^\sim & \omega_{\cG^0}\otimes \mathcal{O}_{\Kbar,b}, & 
}
\]
where the lowest vertical arrows are the Hodge-Tate maps $\HT_{b}$ and the lowest horizontal arrow is an isomorphism. By Lemma \ref{cHprop} (\ref{cH-HT}), the subgroup $\cD(\okbar)$ coincides with the kernel of the lowest right vertical arrow for any $b$ satisfying $w/(p-1)<b\leq 1-w$. This implies that the subgroup $\tilde{\cD}(\okbar)=\cC'(\okbar)$ is the kernel of the lowest left vertical arrow for any such $b$ and the assertion (\ref{cansubZ1-HT}) follows. This concludes the proof of Theorem \ref{cansubZ1}.
\end{proof}

Since the arguments in the proof of \cite[Theorem 1.1]{Ha_cansub} work verbatim also for $p=2$, Theorem \ref{mainZ} follows from Theorem \ref{cansubZ1}. 

Moreover, we can show the following proposition on anti-canonical subgroups, by modifying the proof of \cite[Proposition 4.3]{Ha_cansub}.

\begin{prop}\label{noncanisogZ}
Let $\cG$ be a truncated Barsotti-Tate group of level two, height $h$ and dimension $d$ over $\okey$ with $0<d<h$ and Hodge height $w=\Hdg(\cG)$. Suppose $w<1/2$ and let $\cC$ be the canonical subgroup of $\cG[p]$ as in Theorem \ref{cansubZ1}. Let $\cD$ be a finite flat closed subgroup scheme of $\cG[p]$ over $\okey$ such that the natural map $\cC(\okbar)\oplus \cD(\okbar)\to \cG[p](\okbar)$ is an isomorphism.
\begin{enumerate}
\item\label{noncanZ1}
The truncated Barsotti-Tate group $p^{-1}\cD/\cD$ of level one has Hodge height $\Hdg(p^{-1}\cD/\cD)=p^{-1}w$.
\item\label{noncanZ2}
The subgroup scheme $\cG[p]/\cD$ is the canonical subgroup of $p^{-1}\cD/\cD$.
\item\label{noncanZ3}
$\deg(\cD)=p^{-1}w$.
\end{enumerate}
\end{prop}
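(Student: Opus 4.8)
The strategy is to follow the proof of \cite[Proposition 4.3]{Ha_cansub}, which carries over to $p=2$ now that the level one theory of Theorem \ref{cansubZ1} is available for every prime.

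\emph{Reduction and setup.} First I would reduce to the case where the residue field $k$ of $K$ is perfect by the faithfully flat base change $\okey\to\oknrh$: the canonical subgroup of $\cG[p]$, its Hodge height and the degree function are all compatible with this extension (Theorem \ref{cansubZ1} and Lemma \ref{cHFrobKer}), $\cD$ base changes to a complement of $\cC$ of the same type, and the asserted equalities of subgroup schemes, Hodge heights and degrees may be checked after it. Next, let $f\colon\cG\to\cG/\cD$ be the isogeny with kernel $\cD$; since $\cD\subseteq\cG[p]$, the preimage of $(\cG/\cD)[p]$ under $f$ equals $p^{-1}\cD$, so $f$ induces an isomorphism $p^{-1}\cD/\cD\xrightarrow{\sim}(\cG/\cD)[p]$. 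From this, together with the flatness properties of $\cG$ as a truncated Barsotti--Tate group of level two, one checks (as in \cite{Ha_cansub}, via the criterion of \cite{Il}) that $\cH:=p^{-1}\cD/\cD$ is a truncated Barsotti--Tate group of level one, of height $h$ and dimension $d$, so in particular $\deg(\cH)=d$. I would also record the exact sequences
\[
0\to\cD\to\cG[p]\to\cG[p]/\cD\to 0,\qquad 0\to\cG[p]/\cD\to\cH\to\cD\to 0,
\]
the second one arising from the homomorphism $\cH\to\cD$, $x+\cD\mapsto px$, whose kernel is $\cG[p]/\cD$; equivalently, $f$ restricts on $p$-torsion to a homomorphism $\psi\colon\cG[p]\to\cH$ with kernel $\cD$ and image $\cG[p]/\cD$, and the homomorphism $g\colon\cG/\cD\to\cG$ determined by $gf=[p]$ restricts to $\rho\colon\cH\to\cG[p]$ with kernel $\cG[p]/\cD$ and image $\cD$.

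\emph{Part (\ref{noncanZ1}).} The core of the argument is the equality $\Hdg(\cH)=p^{-1}w$, which is the step I expect to be the main obstacle. I would prove it by analysing the situation over $\sS_{1-w}$: there, $\cC\times\sS_{1-w}$ is the Frobenius kernel of $\cG[p]\times\sS_{1-w}$ (and in fact of $\cG\times\sS_{1-w}$), and $\cC$ is generically a complement of $\cD$. Using this one identifies $f\times\sS_{1-w}$, equivalently $\psi\times\sS_{1-w}$ and $\rho\times\sS_{1-w}$, with the Frobenius and Verschiebung data of $\cG$, exhibiting $\cH$ over the relevant truncation as the result of undoing one Frobenius twist of $\cG[p]$; hence the determinant of the Verschiebung of $\cH\times\sS_1$ on $\Lie$ acquires valuation $w/p$, that is $\Hdg(\cH)=p^{-1}w$. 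The delicate points are to keep track of the precise truncations and to argue throughout with the homomorphisms $\psi,\rho$ rather than with $\cD$ itself, which is not canonically determined over $\sS_{1-w}$; the essential inputs are \cite[Proposition 1]{Fa}, valid for every $p$, and Lemma \ref{HodgeZ}, which together take the place of the congruences of defining equations used for $p\ge3$ in \cite{Ha_cansub}.

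\emph{Parts (\ref{noncanZ2}) and (\ref{noncanZ3}).} Since $p^{-1}w<1/(2p)<p/(p+1)$, Theorem \ref{cansubZ1} furnishes the canonical subgroup $\cC_\cH$ of $\cH$, uniquely characterized by the property that $\cC_\cH\times\sS_{1-p^{-1}w}$ be the Frobenius kernel of $\cH\times\sS_{1-p^{-1}w}$. Running the same reduction analysis over $\sS_{1-p^{-1}w}$ shows that $(\cG[p]/\cD)\times\sS_{1-p^{-1}w}$ is precisely this Frobenius kernel, and then the uniqueness in Theorem \ref{cansubZ1}(\ref{cansubZ1-1}) forces $\cG[p]/\cD=\cC_\cH$; this is part (\ref{noncanZ2}). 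Finally part (\ref{noncanZ3}) is formal: applying Theorem \ref{cansubZ1}(\ref{cansubZ1-deg}) to $\cH$ with canonical subgroup $\cG[p]/\cD$ gives $\deg(\cH/(\cG[p]/\cD))=\Hdg(\cH)=p^{-1}w$, while the second exact sequence above identifies $\cH/(\cG[p]/\cD)$ with $\cD$; hence $\deg(\cD)=p^{-1}w$. (As a consistency check, the two exact sequences together give $\deg(\cG[p]/\cD)=d-p^{-1}w$ and recover $\deg(\cG[p])=d$.)
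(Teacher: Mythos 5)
Your overall skeleton (reduce to perfect residue field, show that $\cG[p]/\cD$ becomes the Frobenius kernel over a suitable truncation, then invoke the uniqueness and degree statements of Theorem \ref{cansubZ1}) matches the high-level shape of the argument, and your deductions of parts (\ref{noncanZ2}) and (\ref{noncanZ3}) from part (\ref{noncanZ1}) are fine. But the one step that carries all the content --- and the only step for which this paper actually supplies a new argument rather than citing \cite[Proposition 4.3]{Ha_cansub} --- is left as a heuristic in your write-up. You say the equality $\Hdg(p^{-1}\cD/\cD)=p^{-1}w$ follows by ``identifying $f\times\sS_{1-w}$ with the Frobenius and Verschiebung data of $\cG$'' and ``undoing one Frobenius twist,'' and you yourself flag this as ``the main obstacle.'' That is a gap, not a proof; and the specific mechanism you propose is confused about which subgroup plays which role. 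Over $\sS_{1-w}$ it is $\cC\times\sS_{1-w}$, not $\cD\times\sS_{1-w}$, that is the Frobenius kernel, and $\cD\times\sS_{1-w}$ is not canonically determined, so the quotient map $f:\cG\to\cG/\cD$ (kernel $\cD$) cannot be identified with $F$ or $V$ over that truncation; the relation between $f$ and Frobenius is only a generic one, via the generic isomorphism $\cC\to\cG[p]/\cD$, and extending it across the special fiber is precisely what has to be proved.

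The paper closes this gap by a route you do not mention, and which is the whole point of redoing the proposition for $p=2$. One first observes that $\cG[p]/\cD$ is connected (connected--\'etale sequence plus the generic isomorphism with $\cC$), so both $\cC^\vee$ and $(\cG[p]/\cD)^\vee$ are unipotent and admit Kisin modules $\SGl$, $\SGl'$. By \cite[Corollary 2.2.2]{Li_FC} the generic isomorphism $(\cG[p]/\cD)^\vee\to\cC^\vee$ yields an injection $\SGl\to\SGl'$ of free $\SG_1$-modules of rank $d$, whence $v_R(\det\varphi_{\SGl'})\le v_R(\det\varphi_{\SGl})=w$ by passing to top exterior powers. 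Feeding this determinant bound into the matrix computation of Lemma \ref{cHFrobKer} (the divisibility of the entries of $u^eD^{-1}$ by $u^{e(1-w)}$, combined with \cite[Proposition 1]{Fa}) shows that $(\cG[p]/\cD)\times\sS_{1-w}$ is killed by Frobenius; only after that does the paper quote \cite[Proposition 4.3]{Ha_cansub} for the remaining bookkeeping, including the value $p^{-1}w$ of the Hodge height. Without an argument of this kind (or a genuinely worked-out substitute valid for $p=2$), your proposal does not establish part (\ref{noncanZ1}), and parts (\ref{noncanZ2}) and (\ref{noncanZ3}) fall with it.
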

\begin{proof}
By a base change argument as before, we may assume that the residue field $k$ of $K$ is perfect. Note that the truncated Barsotti-Tate group $p^{-1}\cD/\cD$ of level one is also of height $h$ and dimension $d$. The natural homomorphism $\cC\to \cG[p]/\cD$ induces an isomorphism between the generic fibers of both sides. Since the group scheme $\cC$ is connected, the connected-etale sequence implies that the group scheme $\cG[p]/\cD$ is also connected. Now we claim that the group scheme $(\cG[p]/\cD)\times \sS_{1-w}$ is killed by the Frobenius. For this, let $\SGl$ and $\SGl'$ be the objects of the category $\ModSffrV$ corresponding to the unipotent finite flat group schemes $\cC^\vee$ and $(\cG[p]/\cD)^\vee$ via the anti-equivalence $\cG(-)$, respectively. By \cite[Corollary 2.2.2]{Li_FC}, the generic isomorphism $(\cG[p]/\cD)^\vee \to \cC^\vee$ corresponds to an injection $\SGl \to \SGl'$. Then the $\SG_1$-modules $\wedge^d \SGl$ and $\wedge^d\SGl'$ are free of rank one and this injection induces an injection $\wedge^d \SGl\to \wedge^d\SGl'$. Hence we obtain the inequality $v_R(\det \varphi_{\SGl'}) \leq v_R(\det \varphi_{\SGl})=w$. Since the group scheme $\cG[p]/\cD$ is a finite flat closed subgroup scheme of the truncated Barsotti-Tate group $(p^{-1}\cD/\cD)^0$ of level one over $\okey$, a similar argument to the proof of Lemma \ref{cHFrobKer} implies the claim. Then the proposition follows as in the proof of \cite[Proposition 4.3]{Ha_cansub}.
\end{proof}

We also have the following generalization of \cite[Proposition 4.4]{Ha_cansub} to the case of $p=2$.

\begin{prop}\label{uniqueFrnZ}
Let $\cG$ be a truncated Barsotti-Tate group of level $n$, height $h$ and dimension $d$ over $\okey$ with $0<d<h$ and Hodge height $w<p(p-1)/(p^{n+1}-1)$. Let $\cC_n$ be the level $n$ canonical subgroup of $\cG$, which is defined by Theorem \ref{mainZ}. Let $\cD_n$ be a finite flat closed subgroup scheme of $\cG$ over $\okey$ such that $\cD_n(\okbar)\simeq (\bZ/p^n\bZ)^d$ and the group scheme $\cD_n\times\sS_{1-p^{n-1}w}$ is killed by the $n$-th iterated Frobenius $F^n$. Then we have $\cC_n=\cD_n$.
\end{prop}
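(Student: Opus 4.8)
The plan is to adapt the proof of \cite[Proposition 4.4]{Ha_cansub}, using the base-change compatibility of the canonical subgroup (Lemma \ref{cHFrobKer} and its higher-level consequences) together with \cite[Proposition 1]{Fa} in place of the ramification correspondence theorem, which is available only for $p\geq 3$. First I would reduce to the case that the residue field $k$ of $K$ is perfect: the Hodge height is unchanged by base change, the hypotheses on $\cD_n$ are inherited by $\cD_n\times\Spec(\oel)$ for any finite extension $L/K$, and the formation of $\cC_n$ commutes with finite base change (for $n=1$ this is the last assertion of Lemma \ref{cHFrobKer}, and for general $n$ it follows from the iterative construction of $\cC_n$ used in the proof of Theorem \ref{mainZ}), so an equality of subgroup schemes after base change descends. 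I would also record the numerical inequalities used below: for $n\geq 1$ the hypothesis $w<p(p-1)/(p^{n+1}-1)$ implies $w<1/(p^{n-2}(p+1))$ and $w<(p-1)/(p^{n}-1)$, so $\cC_n$ exists and enjoys the properties of Theorem \ref{mainZ}(1)--(2); moreover $pw<p(p-1)/(p^{n}-1)$, and $w<(p-1)/p$ once $n\geq 2$.

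The first step is to show that $\cD_n$ and $\cC_n$ have the same reduction over $\sS_{1-p^{n-1}w}$. Since $\cD_n\times\sS_{1-p^{n-1}w}$ is killed by $F^n$, it is contained in the kernel of $F^n$ on $\cG\times\sS_{1-p^{n-1}w}$; the latter is finite flat of order $p^{nd}$, the order of $\cD_n$ (using $\cD_n(\okbar)\simeq(\bZ/p^n\bZ)^d$), so the inclusion is an equality, and by Theorem \ref{mainZ}(1) the same description holds for $\cC_n$. Hence $\cD_n\times\sS_{1-p^{n-1}w}=\Ker(F^n)=\cC_n\times\sS_{1-p^{n-1}w}$; in particular $\cD_n$ is connected, $\cD_n[p]$ is a connected finite flat closed subgroup scheme of $\cG[p]$ of order $p^d$ with $\cD_n[p](\okbar)\simeq(\bZ/p\bZ)^d$, and, comparing orders again, $\cD_n[p]\times\sS_{1-p^{n-1}w}=(\Ker F^n)[p]=\Ker(F)=\cC_1\times\sS_{1-p^{n-1}w}$, where $\cC_1=\cC_n[p]$ denotes the level one canonical subgroup of $\cG[p]$ (which has the same Hodge height $w$).

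Next I would argue by induction on $n$. When $n=1$ we have $1-p^{n-1}w=1-w$, so $\cD_1\times\sS_{1-w}=\Ker(F)$, and the uniqueness assertion of Theorem \ref{cansubZ1}(1) gives $\cD_1=\cC_1$. For $n\geq 2$, assume the statement for $n-1$. The crucial claim is $\cD_n[p]=\cC_1$. Granting it, $\cC_1\subseteq\cD_n$ and $p^{n-1}\cD_n\subseteq\cD_n[p]=\cC_1$, so $\cD_n/\cC_1$ is a finite flat closed subgroup scheme of $(\cG/\cC_1)[p^{n-1}]$ with $(\cD_n/\cC_1)(\okbar)\simeq(\bZ/p^{n-1}\bZ)^d$; using $\cC_1\times\sS_{1-p^{n-1}w}=\Ker(F)$ and the factorisation of $F^{n}$ through $F$, the reduction $(\cD_n/\cC_1)\times\sS_{1-p^{n-1}w}$ is killed by $F^{n-1}$. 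Since $\Hdg(\cG/\cC_1)=pw$ (one of the steps in the construction of higher canonical subgroups, following \cite[Theorem 1.1]{Ha_cansub}) and $pw<p(p-1)/(p^{(n-1)+1}-1)$, the inductive hypothesis applies to $\cG/\cC_1$ and yields $\cD_n/\cC_1=\cC_{n-1}(\cG/\cC_1)=\cC_n/\cC_1$, whence $\cD_n=\cC_n$.

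The main obstacle is the claim $\cD_n[p]=\cC_1$ for $n\geq 2$. We know that $\cD_n[p]$ and $\cC_1$ are connected finite flat closed subgroups of $\cG[p]$ of order $p^d$ that agree over $\sS_{1-p^{n-1}w}$, and since $1-p^{n-1}w\leq 1-w$ the point is to \emph{upgrade} this congruence to one over $\sS_{1-w}$, equivalently (via \cite[Proposition 1]{Fa}) to show that $\cD_n[p]\times\sS_{1-w}$ is killed by $F$; once this is known, the uniqueness part of Theorem \ref{cansubZ1}(1) identifies $\cD_n[p]$ with $\cC_1$. To carry out the upgrade I would use the Hodge--Tate description available because $n\geq 2$ forces $w<(p-1)/p$: by Theorem \ref{cansubZ1}(\ref{cansubZ1-HT}), $\cC_1(\okbar)$ is the kernel of $\HT_{1-w}\colon\cG[p](\okbar)\to\omega_{\cG[p]^\vee}\otimes\cO_{\Kbar,1-w}$, and using the functoriality of the Hodge--Tate map along $\cD_n[p]\hookrightarrow\cG[p]$ together with a degree estimate for $\omega_{\cD_n[p]}$ extracted from the equality $\cD_n\times\sS_{1-p^{n-1}w}=\Ker(F^n)$ and the $p$-divisibility inside $\cD_n$, one obtains $\cD_n[p](\okbar)\subseteq\Ker\HT_{1-w}=\cC_1(\okbar)$; comparing the two groups of order $p^d$ gives equality of generic fibres, hence $\cD_n[p]=\cC_1$. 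Producing the degree estimate precise enough for this step — equivalently, controlling $\deg(\cG[p]/\cD_n[p])$, for which I expect the anti-canonical degree computations of \cite[Proposition 4.3]{Ha_cansub} to be the right tool — is the technical heart of the argument, exactly as in \cite[Proposition 4.4]{Ha_cansub}.
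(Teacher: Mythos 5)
Your reduction to a perfect residue field, the identification $\cD_n\times\sS_{1-p^{n-1}w}=\Ker F^n=\cC_n\times\sS_{1-p^{n-1}w}$ by comparing orders, and the inductive framework in which everything hinges on the level-one claim $\cD_n[p]=\cC_1$ all agree with the paper's proof. But that level-one claim is exactly the step you leave open, and the route you sketch toward it does not work as stated. You propose to control $\deg(\cG[p]/\cD_n[p])$ by invoking the anti-canonical degree computation (Proposition \ref{noncanisogZ}); that proposition, however, computes $\deg\cD=p^{-1}w$ only for a subgroup $\cD$ that is a \emph{complement} of the canonical subgroup, i.e.\ satisfies $\cC(\okbar)\oplus\cD(\okbar)=\cG[p](\okbar)$. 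The subgroup $\cD_n[p]$ is a candidate for $\cC_1$ itself, not a complement of it, so the proposition gives no information about its degree, and no other degree estimate is supplied; the subsequent Hodge--Tate argument therefore has nothing to run on. (A secondary point: you treat $\cD_n[p]$ as finite \emph{flat}, which is not automatic for the kernel of multiplication by $p$; the paper works with the scheme-theoretic closure of $\cD_n(\okbar)[p]$ instead.)

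The paper closes this gap by contradiction rather than by a direct degree bound on $\cD_1$. Assuming $\cC_1\neq\cD_1$, one chooses a complement $\cE$ of $\cC_1$ in $\cG[p]$ with $\cD_1(\okbar)\cap\cE(\okbar)\neq 0$ and lets $\cF$ be the scheme-theoretic closure of that intersection. After checking that $\cE$, and hence $\cF$, is unipotent (because $\cC_1$ contains the multiplicative part of $\cG[p]$ and $\cE\to\cG[p]/\cC_1$ is a generic isomorphism), Proposition \ref{noncanisogZ} (\ref{noncanZ3}) legitimately applies to the complement $\cE$ and yields $v_R(\det\varphi_\SGf)=\deg\cF\leq\deg\cE=p^{-1}w$ for the Kisin module $\SGf$ of $\cF$. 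On the other hand $\cF\subseteq\cD_n$, so $\cF\times\sS_{1-p^{n-1}w}$ is killed by $F^n$; evaluating $\bD^{*}(\cF)$ on the divided power thickening $R_{p(1-p^{n-1}w)}\to\cO_{\Kbar,1-p^{n-1}w}$ converts this into the lower bound $p(1-p^{n-1}w)\leq v_R(\det\varphi_\SGf)\,p(p^n-1)/(p-1)$. Combining the two bounds gives $w\geq p(p-1)/(p^{n+1}-1)$, contradicting the hypothesis. This interplay between the anti-canonical degree of a well-chosen complement and the Frobenius-kill condition is the missing idea; without it, or a genuine substitute for your unproved degree estimate, the proof is incomplete.
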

\begin{proof}
Suppose $n\geq 2$. Let $\cC_1$ be the scheme-theoretic closure of $\cC_n(\okbar)[p]$ in $\cC_n$ and define $\cD_1$ similarly. The group scheme $\cC_1$ coincides with the canonical subgroup of $\cG[p]$ by Theorem \ref{mainZ} (\ref{mainZ-sub}). We claim $\cC_1=\cD_1$, which implies the proposition by an induction as in the proof of \cite[Proposition 4.4]{Ha_cansub}. For this, by a base change argument as before, we may assume that the residue field $k$ of $K$ is perfect and $\cG(\okbar)=\cG(\okey)$. Suppose $\cC_1\neq \cD_1$. Then we can find a finite flat closed subgroup scheme $\cE$ of $\cG[p]$ such that $\cC_1(\okbar)\oplus \cE(\okbar)=\cG[p](\okbar)$ and $\cD_1(\okbar)\cap \cE(\okbar)\neq 0$. Let $\cF$ be the scheme-theoretic closure of the latter intersection in $\cG[p]$. As in the proof of Theorem \ref{cansubZ1} (\ref{cansubZ1-HT}), we see that the group scheme $\cC_1$ contains the multiplicative part of $\cG[p]$. Thus the quotient $\cG[p]/\cC$ is unipotent. Since the natural map $\cE\to \cG[p]/\cC$ is a generic isomorphism, the connected-etale sequence shows that $\cE$, and thus also $\cF$, are unipotent. Let $\SGf$ be the object of the category $\ModSGfV$ corresponding to $\cF$. Then Lemma \ref{HodgeZ} (\ref{HodgeZ-deg}) and Proposition \ref{noncanisogZ} (\ref{noncanZ3}) implies
\[
v_R(\det(\varphi_{\SGf}))=\deg(\cF)\leq \deg(\cE)=p^{-1}w.
\]

On the other hand, the group scheme $\cF\times \sS_{1-p^{n-1}w}$ is killed by the $n$-th iterated Frobenius. Put $i=1-p^{n-1}w$. Evaluating $\bD^{*}(\cF)$ on the divided power thickening $R_{pi}\to \cO_{\Kbar,i}$ as before, we see that the map
\[
1\otimes \varphi\otimes \varphi_{\SGf}: R_{pi}\otimes_{\varphi,R_{pi}} (R_{pi}\otimes_{\varphi,\SG}\SGf) \to R_{pi}\otimes_{\varphi,\SG}\SGf
\]
is the one induced by the Frobenius 
\[
\bD^{*}(F): \bD^{*}((\cF\times\sS_{i})^{(p)})\to \bD^{*}(\cF\times\sS_{i})
\] 
and that the composite of its pull-backs
\[
\varphi^{n*}(R_{pi}\otimes_{\varphi,\SG}\SGf) \to \varphi^{n-1*}(R_{pi}\otimes_{\varphi,\SG}\SGf) \to \cdots\to R_{pi}\otimes_{\varphi,\SG}\SGf
\]
is the map induced by the $n$-th iterated Frobenius. Taking the valuation of the determinant of this map, we obtain the inequalities 
\[
p(1-p^{n-1}w)\leq v_R(\det\varphi_\SGf)p(p^n-1)/(p-1)\leq w(p^n-1)/(p-1), 
\]
which contradict the assumption on $w$ and the equality $\cC_1=\cD_1$ follows.
\end{proof}



\end{document}